\newcommand{\cl}[1]{\marginpar{\color{black}\tiny #1 --cl}} 
\theoremstyle{plain}
\newtheorem{theorem}{Theorem}[section]
\newtheorem{thm}{Theorem}[section]
\newtheorem{lemma}[theorem]{Lemma}
\newtheorem{corollary}[theorem]{Corollary}
\newtheorem{conjecture}[theorem]{Conjecture}
\newtheorem{proposition}[theorem]{Proposition}
\newtheorem{prop}[theorem]{Proposition}
\newtheorem{prop-defn}[theorem]{Proposition-Definition}
\newtheorem*{theorem:main}{Main Theorem} 
\theoremstyle{definition}
\newtheorem{defn}[theorem]{Definition}
\newtheorem{remark}[theorem]{Remark}
\newtheorem{question}[theorem]{Question}
\newtheorem*{remark*}{Remark}
\newtheorem*{remarks*}{Remarks}
\newcommand{\C}{\mathcal C}
\newcommand{\Aff}{{\rm{Aff}}}
\newcommand{\PSL}{{\rm{PSL}}}
\newcommand{\GL}{{\rm{GL}}}
\newcommand{\SL}{\rm SL}
\newcommand{\SAff}{{\rm SAff}}
\newcommand{\Hom}{\rm{Hom}}
\title{Fibered $3$--manifolds and Veech groups}
\author[Leininger]{Christopher J. Leininger}
	\address{Department of Mathematics, Rice University, Houston, TX}
	\email{cjl12@rice.edu}
\author[Rafi]{Kasra Rafi}
    \address{Department of Mathematics, University of Toronto, Ontario, Canada}
    \email{rafi@math.toronto.edu}
\author[Rouse]{Nicholas Rouse}
	\address{Department of Mathematics, University of British Columbia, Vancouver, BC Canada}
	\email{rouse@math.ubc.ca}
\author[Shinkle]{Emily Shinkle}
	\address{Department of Mathematics, University of Illinois, Urbana-Champaign, IL}
	\email{esshinkle@gmail.com}
\author[Verberne]{Yvon Verberne}
    \address{Department of Mathematics, University of Western Ontario, Ontario, Canada}
    \email{verberne.math@gmail.com}
\date{\today}
\begin{document}

\begin{abstract}
We study Veech groups associated to the pseudo-Anosov monodromies of fibers and foliations of a fixed hyperbolic 3-manifold.  Assuming Lehmer’s Conjecture, we prove that the Veech groups associated to fibers generically contain no parabolic elements.  For foliations, we prove that the Veech groups are always elementary.
\end{abstract}

\maketitle

\section{Introduction}

A pseudo-Anosov homeomorphism $f\colon S\to S$ on an orientable surface determines a complex structure and holomorphic quadratic differential, $(X,q)$, up to Teichm\"uller deformation, for which the vertical and horizontal foliations are the stable and unstable foliations of $f$.  The pseudo-Anosov generates an infinite cyclic subgroup of the full group of orientation preserving affine homeomorphisms, $\Aff_+(X,q)$.

For a finite type surface $S$, we say that the pseudo-Anosov homeomorphism $f$ is {\em lonely} if $\langle f \rangle < \Aff_+(S,q)$ has finite index.  The motivation for this paper is the following; see e.g.~Hubert-Masur-Schmidt-Zorich \cite{HuMaScZo} and Lanneau~\cite{Lanneau}

\begin{conjecture} [Lonely p-As] There exist lonely pseudo-Anosov homeomorphisms.  In fact, lonely pseudo-Anosov homeomorphisms are generic.
\end{conjecture}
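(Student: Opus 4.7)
The plan is to reformulate loneliness as a question about the Veech group $V(X,q) \subset \PSL_2(\mathbb{R})$, the image of $\Aff_+(X,q)$ under the derivative map, and then establish genericity by parametrizing pseudo-Anosovs as monodromies of fibrations of a fixed hyperbolic $3$--manifold. Since $f$ already contributes a hyperbolic element to $V(X,q)$, the subgroup $\langle f\rangle$ has finite index in $\Aff_+(X,q)$ precisely when $V(X,q)$ is virtually cyclic with this element as virtual generator. Equivalently, one must rule out (i) parabolic elements in $V(X,q)$, and (ii) hyperbolic elements of $V(X,q)$ whose axis differs from that of $f$.

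For the genericity statement, I would fix a hyperbolic fibered $3$--manifold $M$ with $b_1(M) \geq 2$ and work inside a fibered cone $C$ of the Thurston norm ball. By the Fried--McMullen theory, primitive integer classes $\alpha \in C \cap H^1(M;\mathbb{Z})$ parametrize fibrations with pseudo-Anosov monodromies $f_\alpha\colon S_\alpha \to S_\alpha$, and the stretch factor $\lambda_\alpha$ is the largest-modulus root of the specialization of the Teichm\"uller polynomial $\Theta_F$ at $\alpha$. ``Generic'' would then mean asymptotic density one among primitive integer classes in $C$; since every pseudo-Anosov is a monodromy of some such fibration, the desired conjecture would follow from a sufficiently uniform version of this statement across all fibered faces of all hyperbolic $3$--manifolds.

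The core step is to rule out parabolics in $V(X_\alpha,q_\alpha)$ for generic $\alpha$. A parabolic element corresponds to a cylinder decomposition of $(X_\alpha,q_\alpha)$ in some direction with rationally commensurable moduli. The cylinder moduli lie in the invariant trace field of $V(X_\alpha,q_\alpha)$, which is controlled by $\lambda_\alpha$ and hence by $\Theta_F$. The plan is to translate commensurability of the moduli into an algebraic constraint, so that exceptional $\alpha$ either lie on a proper subvariety of $C$ coming from $\Theta_F$ or produce $\lambda_\alpha$ of uniformly small Mahler measure; Lehmer's conjecture then rules out the latter uniformly, and the former cuts down to a density-zero subset of primitive integer classes.

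The main obstacle is precisely the arithmetic-to-geometry step above: making explicit how the existence of a parabolic direction on $(X_\alpha,q_\alpha)$ imposes an algebraic condition on $\lambda_\alpha$ strong enough to be defeated by Lehmer's bound, and checking that the resulting exceptional locus is genuinely thin. Obstruction (ii) should be comparatively mild: any extra hyperbolic in $V(X_\alpha,q_\alpha)$ sharing the foliations of $f_\alpha$ is an affine symmetry commuting with a power of $f_\alpha$, and the centralizer of $f_\alpha$ in the mapping class group is virtually cyclic, so this case is absorbed after passing to a finite-index subgroup. Without Lehmer's conjecture I do not see how to extract the uniform arithmetic bounds required across a fibered cone, so the strategy appears to be intrinsically conditional on Lehmer, consistent with the hypothesis in the abstract.
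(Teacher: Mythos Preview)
The statement is a \emph{conjecture} in the paper, not a theorem; the paper does not prove it and offers only partial evidence toward it (Theorems~\ref{T:locally finite parabolics} and~\ref{T:lonely leaves}). So there is no proof in the paper to compare your proposal against, and your write-up is a strategy sketch rather than a proof.

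Your sketch has a genuine gap in the handling of obstruction (ii). You define (ii) as hyperbolic elements with axis \emph{different} from that of $f_\alpha$, but your argument (``any extra hyperbolic sharing the foliations of $f_\alpha$ is an affine symmetry commuting with a power of $f_\alpha$\ldots'') addresses only hyperbolics on the \emph{same} axis. Those are the easy case, absorbed by virtual cyclicity; the hard case---a second hyperbolic fixed-point pair on $\partial\mathbb{H}^2$---is exactly what you have not touched. The paper explicitly flags this: it notes (citing \cite{HuLaMo}) that absence of parabolics does \emph{not} imply loneliness, so even a complete resolution of (i) would not finish the conjecture. There is also a scope problem: ``every pseudo-Anosov is a monodromy of some fibration'' is true but unhelpful, since the resulting mapping torus may have $b_1=1$, making the fibered cone a single ray and your density argument vacuous. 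The paper avoids this by restricting to Question~\ref{Q:lonely fibers} for a fixed manifold rather than claiming genericity over all pseudo-Anosovs.

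Your approach to (i) via Lehmer and the Teichm\"uller polynomial is in the spirit of the paper's Theorem~\ref{T:locally finite parabolics}, but the paper's mechanism is sharper and does not go through cylinder moduli: it bounds the number of real roots of the specialization $\Theta_{\mathcal C}^{\alpha}$ by Descartes's rule of signs, uses Corollary~\ref{C:asymptotics} to force the real-root contribution to the Mahler measure toward $1$, and then invokes Lehmer to produce a non-real Galois conjugate of $\lambda_\alpha$ off the unit circle. By Theorems~\ref{theorem: totally real trace field if parabolics} and~\ref{T:pA trace generates} this makes the trace field not totally real and hence rules out parabolics. Even so, the conclusion is only that the parabolic locus is \emph{discrete} in the fibered face, not that lonely pseudo-Anosovs are generic.
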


There is not an agreed upon notion of ``generic'', and some care must be taken:  work of Calta \cite{Calta} and McMullen \cite{McGenus2,McTeichTrace} shows that {\em no} pseudo-Anosov homeomorphism on a surface of genus $2$, with orientable stable/unstable foliation is lonely.  In fact, in this case, not only are the pseudo-Anosov homeomorphisms not lonely, but their Veech groups always contain parabolic elements.

In this paper, we consider infinite families of pseudo-Anosov homeomorphism arising as follows; see \S\ref{S:fibered 3-manifolds}.  Suppose $f \colon S \to S$ is a pseudo-Anosov homeomorphism of a finite type surface $S$ and $M_f$ the mapping torus (which is hyperbolic by Thurston's Hyperbolization Theorem \cite{Otal-ThurstonHyp}). The connected cross sections of the suspension flow are organized by their cohomology classes (up to isotopy), which are primitive integral classes in the cone on the open fibered face $F \subset H^1(M,\mathbb R)$ of the Thurston norm ball containing the Poincar\'e-Lefschetz dual of the fiber $S$.  Given such an integral class $\alpha$, the first return map to the cross section $S_\alpha$ is a pseudo-Anosov homeomorphism $f_\alpha \colon S_\alpha \to S_\alpha$.  When $b_1(M) > 1$, there are infinitely many such pseudo-Anosov homeomorphisms; in fact, $|\chi(S_\alpha)|$ is a linear function of $\alpha$, and hence tends to infinity with $\alpha$.  

We let $\bar \alpha \in F$ denote the projection of the primitive integral class $\alpha$ in the cone over $F$, and let $F_{\mathbb Q}$ be the set of all such projections, which is precisely the (dense) set of rational points in $F$.

\begin{question} \label{Q:lonely fibers}
Given a fibered hyperbolic $3$--manifold and fibered face $F$, are the pseudo-Anosov homeomorphism $f_\alpha$ for $\bar \alpha \in F_{\mathbb Q}$ generically lonely?
\end{question}

We will provide two pieces of evidence that the answer to this question is `yes'. Write $\Aff_+(X_\alpha,q_\alpha)$ for the orientation preserving affine group containing $f_\alpha$; see \S\ref{S:Veech groups} for more details.

\newcommand{\TParabolicsA}
{Suppose $F$ is the fibered face of an orientable, fibered, hyperbolic $3$--manifold.  Assuming Lehmer's Conjecture, the set of $\bar \alpha \in F_{\mathbb Q}$  such that $\Aff_+(X_\alpha,q_\alpha)$ contains a parabolic element is discrete in $F$.}
\begin{theorem} \label{T:locally finite parabolics}
\TParabolicsA
\end{theorem}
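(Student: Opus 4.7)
The plan is to argue by contradiction, combining Fried's continuity theorem for dilatations on the fibered face with Lehmer's conjecture applied to Mahler measures of dilatations.

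Suppose the set in question is not discrete. Then there exist $\bar{\alpha}_0\in F$ and a sequence of distinct points $\bar{\alpha}_n\in F_{\mathbb Q}$ with $\bar{\alpha}_n\to \bar{\alpha}_0$, each of whose affine group $\Aff_+(X_{\alpha_n},q_{\alpha_n})$ contains a parabolic. Let $\alpha_n\in H^1(M;\mathbb Z)$ be the primitive integral representative. Any bounded subset of $H^1(M;\mathbb R)$ contains finitely many primitive integral classes, so the distinctness of the $\bar{\alpha}_n$ forces $\|\alpha_n\|\to\infty$; since $|\chi(S_{\alpha})|$ is the Thurston norm of $\alpha$ and is linear on the cone over $F$, this gives $|\chi(S_{\alpha_n})|\to\infty$.

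Next I would invoke Fried's theorem: the function $\bar{\alpha}\mapsto |\chi(S_{\alpha})|\log\lambda(f_{\alpha})$, defined a priori on $F_{\mathbb Q}$, extends continuously to all of $F$, so it is bounded on a neighborhood of $\bar{\alpha}_0$. Combined with $|\chi(S_{\alpha_n})|\to\infty$, this forces the dilatations $\lambda_n:=\lambda(f_{\alpha_n})$ to tend to $1$.

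The crucial (and most delicate) step is then to deduce from the parabolic hypothesis that the Mahler measure $M(\lambda_n)$ of the minimal polynomial of $\lambda_n$ also tends to $1$. A parabolic element of $\Aff_+(X_{\alpha_n},q_{\alpha_n})$ corresponds to a cylinder decomposition of the translation surface $(X_{\alpha_n},q_{\alpha_n})$ with commensurable moduli, which constrains the trace field of the Veech group. Together with McMullen's Teichm\"uller polynomial $\Theta_F$ for the fibered face --- which exhibits $\lambda(f_{\alpha})$ as a root of a single specialization for every $\alpha$ in the fibered cone --- the plan is to show that each $\lambda_n$ is of a restricted algebraic type (e.g.\ Salem, or reciprocal bi-Perron of suitably controlled degree), so that $\log M(\lambda_n)$ is controlled by a multiple of $\log\lambda_n\to 0$.

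To finish, apply Lehmer's conjecture: since each $\lambda_n>1$ is an algebraic integer distinct from a root of unity, there is a universal $c>1$ with $M(\lambda_n)\geq c$, contradicting $M(\lambda_n)\to 1$.

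The main obstacle is the third step --- converting the existence of a parabolic in $\Aff_+(X_{\alpha_n},q_{\alpha_n})$ into a quantitative constraint on the Mahler measure of $\lambda_n$. This requires carefully connecting the cylinder geometry produced by the parabolic to the algebraic structure of the Teichm\"uller polynomial of $F$, and isolating how the asymptotic entropy from Fried's theorem controls the conjugates of $\lambda_n$. Everything else is a reduction; the strength of Lehmer's hypothesis is being used precisely to absorb whatever slack remains in the bi-Perron (or Salem) control on the spectrum of $\lambda_n$.
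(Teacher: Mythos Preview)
Your overall architecture matches the paper's: set up a convergent sequence $\bar\alpha_n\to\bar\alpha_0$ in $F$, use Fried/McMullen to get $\lambda_n\to 1$, and finish with Lehmer. The gap you yourself flag in the third step is real, and the paper fills it with two specific ingredients you are missing.

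First, the consequence of a parabolic is sharper than ``constrains the trace field'': by Hubert--Lanneau, if $\Aff_+(X_{\alpha_n},q_{\alpha_n})$ contains a parabolic then the trace field $\mathbb Q(\lambda_n+\lambda_n^{-1})$ is \emph{totally real}. Since every Galois conjugate of $\lambda_n+\lambda_n^{-1}$ has the form $\zeta+\zeta^{-1}$ for $\zeta$ a conjugate of $\lambda_n$, total reality forces every conjugate $\zeta$ to be either real or on the unit circle. Hence the Mahler measure of the minimal polynomial $p_n$ is contributed entirely by its \emph{real} roots outside the unit circle.

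Second, the control you want is not on the degree of $p_n$ (which grows like $|\chi(S_{\alpha_n})|$ and is unbounded), nor on $\lambda_n$ being Salem or bi-Perron in any structural sense. The key observation is that every specialization $\Theta_{\mathcal C}^{\alpha_n}(t)$ has at most $N$ nonzero terms, where $N$ is the number of monomials in $\Theta_{\mathcal C}$ and is \emph{independent of $n$}. Descartes's rule of signs then gives at most $2N-2$ real roots of $\Theta_{\mathcal C}^{\alpha_n}$, hence of its factor $p_n$. Since each real root has modulus at most $\lambda_n$, one gets $\mathcal M(p_n)\le \lambda_n^{2N-2}\to 1$, contradicting Lehmer. (The paper phrases this contrapositively: writing $\mathcal M(p_n)=A_nB_n$ with $A_n$ the real contribution, $A_n\to 1$ forces $B_n>1$, producing a non-real conjugate off the unit circle and hence a non-totally-real trace field.)

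So your plan is salvageable, but the vague appeal to cylinder geometry and ``restricted algebraic type'' should be replaced by the precise pair: Hubert--Lanneau for the arithmetic constraint, and Descartes on the fixed-sparsity Teichm\"uller polynomial for the uniform real-root bound.
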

In certain examples, the set of classes whose associated Veech group contains parabolics is actually finite (again, assuming Lehmer's conjecture); see Theorem~\ref{T:finite parabolics}.  In \S\ref{S:examples} we describe some explicit computations that illustrate this finite set.  If $M$ is the orientation cover of a non-orientable, fibered $3$--manifold, then the conclusion of Theorem~\ref{T:locally finite parabolics} holds on the invariant cohomology of the covering involution without assuming the validity of Lehmer's Conjecture; see Theorem~\ref{T:non-orientable}.


Much of the defining structure survives for non-integral classes $\alpha \in F - F_{\mathbb Q}$; see \S\ref{S:foliations in cone} for details.  Briefly, we first recall that every $\alpha \in F - F_{\mathbb Q}$ is represented by a closed $1$--form $\omega_\alpha$ which is positive on the vector field generating the suspension flow.  The kernel of $\omega_\alpha$ is tangent to a foliation $\mathcal F_\alpha$, and the flow can be reparameterized to send leaves of $\mathcal F_\alpha$ to other leaves.  There is no longer a first return time, but rather a {\em higher rank abelian group} of return times, $H_\alpha$, to any given leaf $S_\alpha$ of $\mathcal F_\alpha$. 
Work of McMullen \cite{Mc} associates a {\em leaf-wise} complex structure and quadratic differential $(X_\alpha,q_\alpha)$ to each $\alpha \in F - F_{\mathbb Q}$ so that the leaf-to-leaf maps of the flow are all Teichm\"uller maps.  For every leaf $S_\alpha$ of $\mathcal F_\alpha$, the return maps to $S_\alpha$ thus determine an isomorphism from $H_\alpha < \mathbb R$ to a subgroup we denote $H_\alpha^{\Aff} \!\! < \Aff_+(X_\alpha,q_\alpha)$, an abelian group of pseudo-Anosov elements.  Our second piece of evidence for a positive answer to Question~\ref{Q:lonely fibers} is the following.

\newcommand{\TLonely}{{If $F$ is a fibered face of a closed, orientable, fibered, hyperbolic $3$--manifold, then for all $\alpha \in F- F_{\mathbb Q}$, and any leaf $S_\alpha$ of $\mathcal F_\alpha$, the abelian group $H_\alpha^{\Aff} \!\! < \Aff_+(X_\alpha,q_\alpha)$ has finite index.}}
\begin{theorem} \label{T:lonely leaves}
\TLonely
\end{theorem}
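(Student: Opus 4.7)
The plan is to show that every element of $\Aff_+(X_\alpha, q_\alpha)$ is, up to composition with a return map in $H_\alpha^{\Aff}$, a symmetry of the ambient closed hyperbolic $3$--manifold $M$; since $\Isom(M)$ is finite by Mostow rigidity, this will yield finite index. The first step is to record the basic derivative structure: the reparameterized flow on $M$ sends leaves of $\mathcal F_\alpha$ to leaves by Teichm\"uller maps in a single $1$-parameter family (the one determined by the vertical/horizontal foliations of $q_\alpha$), so $DH_\alpha^{\Aff}$ lies in a common diagonal subgroup $A < \PSL_2(\mathbb R)$, and $H_\alpha \to H_\alpha^{\Aff}$ is an isomorphism of abelian groups onto a subgroup of $A$.

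The heart of the argument is to extend a given $\phi \in \Aff_+(X_\alpha, q_\alpha)$ to a homeomorphism $\tilde \phi$ of $M$ that preserves $\mathcal F_\alpha$ and the flow direction. Since $\alpha \in F - F_{\mathbb Q}$, the leaf $S_\alpha$ is dense in $M$, and by McMullen's construction the flat structures on nearby leaves of $\mathcal F_\alpha$ are related by Teichm\"uller deformations coming from the flow. An affine self-map $\phi$ preserves the flat structure on $S_\alpha$ and so should intertwine with these Teichm\"uller deformations, propagating $\phi$ to nearby leaves; continuity along transversals together with the density of $S_\alpha$ in $M$ then should yield a unique extension $\tilde \phi \in \Homeo^+(M)$ preserving $\mathcal F_\alpha$. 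Imposing $\tilde\phi(S_\alpha) = S_\alpha$ as a set identifies the image of $\Aff_+(X_\alpha, q_\alpha)$ under this extension with a subgroup of $\mathcal F_\alpha$--preserving homeomorphisms of $M$ that stabilize $S_\alpha$.

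Finally, analyze this target group, call it $\mathcal H$. It fits into a short exact sequence
\[
1 \to H_\alpha \to \mathcal H \to G \to 1,
\]
where $H_\alpha$ is the subgroup of time translations of the flow preserving $S_\alpha$ and $G$ is the quotient. Each element of $G$ lifts to a homeomorphism of $M$ which, by Mostow rigidity, is isotopic to an isometry; since $M$ is closed and hyperbolic, $\Isom(M)$ is finite, so $G$ is finite. Under the extension map of the previous step, $H_\alpha^{\Aff}$ corresponds to $H_\alpha$, so $[\Aff_+(X_\alpha, q_\alpha) : H_\alpha^{\Aff}] \le |G| < \infty$.

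The main obstacle is the extension step: $\phi$ is defined only on the single dense leaf $S_\alpha$, and transverse to $\mathcal F_\alpha$ there is no classical transverse foliation structure because leaves recur densely in $M$. What should let the extension go through is that the derivative $D\phi \in \PSL_2(\mathbb R)$ records how $\phi$ stretches the flat structure (not just along $S_\alpha$ but infinitesimally in the flow-transverse direction), and combined with McMullen's canonical leaf-wise flat structures and the density of $S_\alpha$, this should force a unique extension. Verifying this carefully, and checking that the induced map $\Aff_+(X_\alpha, q_\alpha) \to \Homeo^+(M)$ is a homomorphism with trivial kernel, is where the technical work lies; the rest of the argument is then formal from Mostow rigidity.
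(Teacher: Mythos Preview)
Your strategy is genuinely different from the paper's, but the extension step contains a circularity. For an affine $\phi\colon S_\alpha\to S_\alpha$ to extend to a homeomorphism of $M$, it must respect the transverse recurrence: if $x,y\in S_\alpha$ are close in $M$---meaning $y=\psi_s^\alpha(x)$ for some $s\in H_\alpha$, though $x,y$ may be far apart in $S_\alpha$---then $\phi(x),\phi(y)$ must also be close in $M$. Writing $M$ as $S_\alpha\times\mathbb R$ modulo the identifications $(x,t)\sim(\psi_s^\alpha(x),t-s)$ for $s\in H_\alpha$, one checks that any continuous extension forces $\phi\circ(\psi_s^\alpha|_{S_\alpha})=(\psi_{\rho(s)}^\alpha|_{S_\alpha})\circ\phi$ for some homomorphism $\rho\colon H_\alpha\to H_\alpha$; that is, $\phi$ must normalize $H_\alpha^{\Aff}$. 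But this is essentially the conclusion you want. The derivative $D\phi$ records only how $\phi$ distorts the leaf-wise flat structure, not how $\phi$ interacts with the return maps, so it does not supply the missing ingredient. There is a secondary gap in the Mostow step as well: each $\tilde\phi$ being \emph{isotopic} to an isometry only gives a homomorphism $\mathcal H\to\Isom(M)$, and you would still need its kernel to be exactly $H_\alpha$, i.e.\ that a foliation-preserving, leaf-wise affine homeomorphism of $M$ isotopic to the identity is necessarily a flow map.

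The paper instead argues intrinsically on the flat surface. From density of $S_\alpha$ in $M$ one extracts a compact $Z\subset S_\alpha$ whose $H_\alpha^{\Aff}[0,1]$--translates cover $S_\alpha$, giving uniformly bounded geometry (in particular a lower bound on saddle-connection lengths). A compactness argument for quasi-conformal maps then shows that any bounded-dilatation sequence in $\Aff_+(X_\alpha,q_\alpha)$ has a subsequence which, after left-multiplication by elements of $H_\alpha^{\Aff}[0,1]$, is eventually constant. Since $H_\alpha^D$ is dense in the diagonal $\Delta<\PSL_2(\mathbb R)$, the closure of $D_\alpha(\SAff(X_\alpha,q_\alpha))$ is a Lie subgroup containing $\Delta$; in each of the three possible cases one manufactures, assuming infinite index, a sequence lying in distinct $H_\alpha^{\Aff}$--cosets with derivatives converging to $I$, contradicting the compactness statement. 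A final direct calculation rules out non-area-preserving affine maps by producing a contraction, impossible by the saddle-connection bound. None of this requires knowing in advance how an arbitrary $\phi$ interacts with the flow.
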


For $\alpha \in F-F_{\mathbb Q}$, the leaves $S_\alpha$ are infinite type surfaces.  In general, there is much more flexibility in constructing affine groups for infinite type surfaces, and exotic groups abound.  Indeed, work of Przyticki-Schmithusen-Valdez \cite{PrScVa} and Ram\'{\i}rez-Valdez \cite{RaVa} proves that {\em any} countable subgroup of $\GL_2(\mathbb R)$ without contractions is the derivative-image of some affine group. (See also Bowman \cite{Bowman-lonely} for a ``naturally occurring" lonely pseudo-Anosov homeomorphism on an infinite type surface of finite area.) Theorem~\ref{T:lonely leaves} says that for the leaves $S_\alpha$ of the foliations and their associated quadratic differentials, the situation is much more rigid.

\subsection*{Acknowledgements} The authors would like to thank Erwan Lanneau, Livio Liechti, Alan Reid, and Ferr\'an Valdez for helpful conversations, and to the anonymous referee for suggestions that improved the readability.  We are particularly grateful to Liechti for suggesting Theorem~\ref{T:non-orientable}.  The first author was partially supported by NSF grant DMS-2106419. The second author was partially supported by NSERC Discovery grant, RGPIN 06486. 
The fifth author was partially supported by an NSERC-PDF Fellowship.

\section{Definitions and background}

\subsection{Fibered \texorpdfstring{$3$}{3}--manifolds} \label{S:fibered 3-manifolds}

Here we explain the set up and background for our work in more detail.  For a pseudo-Anosov homeomorphism $f \colon S\to S$ of an orientable, finite type surface $S$,  let $\lambda(f)$ denote its {\em stretch factor} (also called its {\em dilatation}); see \cite{flp:TTS}.  We write
\[ M = M_f = S \times [0,1]/(x,1) \sim (f(x),0)\]
to denote the mapping torus of the pseudo-Anosov homeomorphism $f$.  The suspension flow $\psi_s$ of $f$ is generated by the vector field $\xi = \frac{\partial}{\partial t}$, where $t$ is the coordinate on the $[0,1]$ factor.  Alternatively, we have the local flow of the same name $\psi_s(x,t) = (x,t+s)$ on $S \times [0,1]$, defined for $t, s+t \in [0,1]$, which descends to the suspension flow.  

A {\em cross section} (or just {\em section}) of the flow is a surface  $S_\alpha \subset M$ transverse to $\xi$, such that for all $x \in S_\alpha$, $\psi_s(x) \in S_\alpha $ for some $s >0$.  If $s(x) > 0$ is the smallest such number, then the {\em first return map} of $\psi_s$ is the map $f_\alpha \colon S_\alpha \to S_\alpha$ defined by $f_\alpha(x) = \psi_{s(x)}(x)$ for $x \in S_\alpha$.  Note that $S (= S \times \{0\}) \subset M$ is a section, and the first return map to $S$ is precisely the map $f = \psi_1|_S$.

Cutting open along an arbitrary section $S_\alpha$ we get a product $S_\alpha \times [0,1]$ where the slices $\{x\} \times [0,1]$ are arcs of flow lines.  Thus, $M$ can also be expressed as the mapping torus of $f_\alpha$, or alternatively, $M$ fibers over the circle with {\em monodromy} $f_\alpha$.  Up to isotopy, the fiber $S_\alpha$ is determined by its Poincar\'e-Lefschetz dual cohomology class $\alpha = [S_\alpha] \in H^1(M; \mathbb Z) \subset H^1(M;\mathbb R) = H^1(M)$.  To see how these are organized, we first recall the following theorem of Thurston \cite{ThNorm}

\begin{theorem} \label{T:Thurston cone} For $M = M_f$ as above, there is a finite union of open, convex, polyhedral cones $\mathcal C_1,\ldots,\mathcal C_k \subset H^1(M)$ such that $\alpha \in H^1(M;\mathbb Z)$ is dual to a fiber in a fibration over $S^1$ if and only if $\alpha \in \mathcal C_j$ for some $j$.  Moreover, there is a norm $\| \cdot \|_T$ on $H^1(M)$ so that for each $\mathcal C_j$, $\| \cdot \|_T$ restricted to $\mathcal C_j$ is linear, and if $\alpha \in \mathcal C_j \cap H^1(M;\mathbb Z)$ then $\|\alpha\|_T$ is the negative of the Euler characteristic of the fiber dual to $\alpha$.
\end{theorem}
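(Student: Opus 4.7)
The plan is to follow Thurston's original argument from his monograph on the norm for the homology of $3$--manifolds, adapted to the present hyperbolic fibered setting.

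First I would construct the norm $\|\cdot\|_T$. For an integral class $\alpha\in H^1(M;\mathbb Z)$, consider properly embedded oriented surfaces $S\subset M$ Poincar\'e--Lefschetz dual to $\alpha$; define
\[ \|\alpha\|_T \;=\; \min_{S}\, \sum_i \max\{0,-\chi(S_i)\}, \]
the minimum taken over all dual $S = \bigsqcup_i S_i$. Homogeneity $\|n\alpha\|_T = |n|\|\alpha\|_T$ is essentially immediate from taking $n$ parallel copies and an innermost-disk/sphere compression argument to handle non-taut pieces. Subadditivity requires a geometric ``double curve sum'': given taut representatives of $\alpha$ and $\beta$, resolve their transverse intersections compatibly with orientations to obtain a representative of $\alpha+\beta$ whose Euler characteristic complexity is the sum of those of the inputs. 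These properties extend $\|\cdot\|_T$ to a seminorm on $H^1(M;\mathbb Q)$ and then, by continuity, to $H^1(M;\mathbb R)$. That the seminorm is in fact a norm uses hyperbolicity of $M$: there are no essential spheres or tori, so no nonzero integral class is represented by spheres, disks, tori, or annuli alone, forcing $\|\alpha\|_T>0$.

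Next, the unit ball $B_T = \{\alpha : \|\alpha\|_T \leq 1\}$ is a finite rational polytope because $\|\cdot\|_T$ takes values in $\mathbb Z$ on the integer lattice and is a norm; this is a standard convex-geometric fact. Thus $B_T$ has finitely many top-dimensional faces $F_1,\ldots,F_k$, and $\|\cdot\|_T$ is linear on each open cone $\mathcal C_j := \mathbb R_{>0}\cdot F_j$.

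The main content is the fibered face theorem: if $\alpha_0\in H^1(M;\mathbb Z)$ is dual to a fiber, then the entire open cone $\mathcal C_j$ containing $\alpha_0$ consists of fibered classes. I would start with the closed non-singular $1$--form $\omega_0$ representing $\alpha_0$ obtained from the fibration $M\to S^1$. Any class $\alpha$ sufficiently close to $\alpha_0$ is represented by a closed $1$--form $\omega$ that remains positive on $\xi = \partial/\partial t$, hence non-singular; integral such $\alpha$ determine fibrations (the form integrates to a map to $S^1$ on the cover, whose fibers are compact because $\omega$ is positive on the flow). For classes farther out in $\mathcal C_j$, the key tool is a \emph{branched surface} $B$ carrying the fiber $S_{\alpha_0}$: perturbing $\omega_0$ in the normal directions to $B$ produces fibers dual to nearby classes, and the branched surface carries surfaces dual to every integral class in the entire cone over the open face. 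Linearity of $\|\cdot\|_T$ on $\mathcal C_j$ then follows from the additivity of Euler characteristic for surfaces carried by $B$ with varying transverse weights, and Thurston's key lemma identifies the norm of a fibered class with $-\chi(\text{fiber})$ since fibers are automatically taut.

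The step I expect to be the main obstacle is the branched surface / openness argument: converting the local information (existence of a nearby non-singular closed $1$--form) into the global conclusion that \emph{every} integral class in the open face-cone is dual to a genuine fiber, and in particular ruling out that perturbation yields only foliations rather than fibrations. This is precisely where Thurston's construction of a branched surface carrying the fibration and his argument controlling the carried surfaces is indispensable; the rest of the theorem is convex-geometric bookkeeping around this core geometric input.
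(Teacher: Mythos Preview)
The paper does not prove this theorem at all: it is stated as background and attributed to Thurston~\cite{ThNorm}, with no argument given. So there is no ``paper's own proof'' to compare against; your proposal is in effect a sketch of Thurston's original argument rather than an alternative to anything in this paper.

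As a sketch of Thurston's proof, your outline of the construction of $\|\cdot\|_T$, the polytope structure of the unit ball, and the openness of the fibered set via perturbation of non-singular closed $1$--forms is accurate. One comment on the part you flag as the main obstacle: Thurston's original argument for showing that the \emph{entire} open face-cone consists of fibered classes does not proceed via branched surfaces. Instead, he first shows linearity of $\|\cdot\|_T$ on the (open) set of fibered classes using an Euler-class pairing, which forces that set to lie in a single face-cone $\mathcal C_j$; then, for an arbitrary integral $\alpha$ in the interior of $\mathcal C_j$, he uses convexity to write a large multiple of $\alpha$ as a fibered class plus something in the cone, and argues that a taut representative of $\alpha$ can be isotoped transverse to the suspension flow, hence is itself a cross section. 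The branched-surface approach you describe is closer to later treatments (Oertel, Fried) and also works, but it is not the route Thurston takes in~\cite{ThNorm}, and in any case the present paper simply cites the result.
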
 
The unit ball $\mathfrak B$ of $\| \cdot \|_T$ is a polyhedron, and each $\mathcal C_j$ is the cone over the interior of a top dimensional face $F_j$ of $\mathfrak B$.

The cones in the theorem are called the {\em fibered cones} of $M$ and the $F_j$ the {\em fibered faces} of $\mathfrak B$.  It follows from Thurston's proof of Theorem~\ref{T:Thurston cone} that each of the sections $S_\alpha$ of $(\psi_s)$ described above must lie in a single one of the fibered cones $\mathcal C$ over a fibered face $F$.  The following theorem elaborates on this, combining results of Fried from \cite{Fr,Fr0}.

\begin{theorem} \label{T:Fried cone} For $M = M_f$ as above, there is a fibered cone $\mathcal C \subset H^1(M)$ such that $\alpha \in H^1(M;\mathbb Z)$ is dual to a section of $(\psi_s)$ if and only if $\alpha \in \mathcal C$.  Moreover, there is a function $\mathfrak h \colon \mathcal C \to \mathbb R_+$ which is continuous, convex, and homogenous of degree $-1$, with the following properties.
\begin{itemize}
\item For any $\alpha \in \mathcal C \cap H^1(M;\mathbb Z)$, $f_\alpha$ is pseudo-Anosov and $\mathfrak h(\alpha) = \log(\lambda(f_\alpha))$.
\item For any $\{ \alpha_n\} \subset \mathcal C$ with $\alpha_n \to \partial \mathcal C$, we have $\mathfrak h(\alpha_n) \to \infty$;
\end{itemize} 
\end{theorem}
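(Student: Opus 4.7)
The plan is to assemble Theorem~\ref{T:Fried cone} from Fried's work in \cite{Fr0, Fr} on cross sections to flows, specialized to the suspension (semi-)flow $(\psi_s)$ of the pseudo-Anosov $f$.

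First I would construct $\mathcal C$ via the duality between cross sections and homology directions from \cite{Fr0}. Define the cone of \emph{homology directions} $\mathcal D \subset H_1(M;\mathbb R)$ as the closed convex cone generated by asymptotic cycles of $(\psi_s)$-invariant probability measures (equivalently, by limit directions of long, nearly-closed orbits). Set
\[
\mathcal C = \{\alpha \in H^1(M) : \alpha(v) > 0 \text{ for all } v \in \mathcal D \setminus \{0\}\},
\]
which is open and convex. Fried's cross-section theorem says an integer class lies in $\mathcal C$ iff it is represented by a closed $1$-form positive on $\xi$ iff it is Poincar\'e--Lefschetz dual to a cross section; in particular $[S] \in \mathcal C$. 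Any integer class in $\mathcal C$ arises from a fibration of the hyperbolic $3$-manifold $M$, whose monodromy is pseudo-Anosov by Thurston's hyperbolization. Since the Thurston fibered cones are disjoint open convex cones and $\mathcal C$ is connected, $\mathcal C$ is contained in a single such cone; conversely, each integer class in the fibered cone containing $[S]$ gives a fibration whose fiber is a cross section of $(\psi_s)$, hence lies in $\mathcal C$. Density of integer points and openness of both cones upgrade this containment to equality.

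For the function $\mathfrak h$, given $\alpha \in \mathcal C$ pick a closed $1$-form $\omega$ representing $\alpha$ with $\omega(\xi) > 0$, reparameterize by $\xi_\alpha = \xi/\omega(\xi)$, and let $\mathfrak h(\alpha)$ be the topological entropy of the resulting flow. For integer $\alpha$ this equals the entropy of the first return map $f_\alpha$ to the section $S_\alpha$, which for the pseudo-Anosov map $f_\alpha$ is $\log \lambda(f_\alpha)$. Homogeneity of degree $-1$ is immediate: rescaling $\omega$ by $t > 0$ rescales $\xi_\alpha$ by $1/t$, and hence the entropy by $1/t$. Continuity and convexity are the technical heart of \cite{Fr}; Fried obtains them via the variational principle, which in this setting reads
\[
\mathfrak h(\alpha) \;=\; \sup_\mu \frac{h(\mu)}{\int \omega(\xi)\,d\mu},
\]
the supremum ranging over $(\psi_s)$-invariant probability measures. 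The denominator equals the pairing of $\alpha$ with the asymptotic cycle of $\mu$ and is therefore linear in $\alpha$, so $\mathfrak h$ is a supremum of reciprocals of positive linear functions, making it convex.

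Finally, for the blow-up on $\partial \mathcal C$: if $\alpha_n \to \alpha_\infty \in \partial \mathcal C$, duality produces a nonzero $v \in \mathcal D$ with $\alpha_\infty(v) = 0$, and any $(\psi_s)$-invariant measure $\mu$ realizing $v$ as its asymptotic cycle satisfies $\int \omega_n(\xi)\,d\mu = \alpha_n(v) \to 0$ while $h(\mu) > 0$, forcing $\mathfrak h(\alpha_n) \to \infty$ via the variational formula. Equivalently, Fried phrases this as $1/\mathfrak h$ being concave on $\mathcal C$ and vanishing on $\partial \mathcal C$. The main obstacle, which I would simply cite from \cite{Fr} rather than reprove, is establishing continuity and convexity of $\mathfrak h$ across all of $\mathcal C$; this is where the variational principle, together with a careful analysis of how Markov-type codings for $(\psi_s)$ depend on the reparameterizing $1$-form, does the real work.
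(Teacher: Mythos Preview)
The paper does not give its own proof of Theorem~\ref{T:Fried cone}: it is stated as background, introduced with ``The following theorem elaborates on this, combining results of Fried from \cite{Fr,Fr0},'' and then simply used. So there is no argument in the paper to compare against; your proposal is already more than the paper attempts.

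As a sketch of how Fried's results assemble into the statement, your outline is broadly accurate: the dual-cone description of $\mathcal C$ via homology directions, the identification of $\mathfrak h(\alpha)$ with the topological entropy of the $\omega_\alpha$-reparameterized flow, the degree~$-1$ homogeneity from rescaling, and convexity from writing $\mathfrak h$ as a supremum of $h(\mu)/\langle\alpha,[\mu]\rangle$ over invariant measures are all the right ingredients, and your identification of $\mathcal C$ with a Thurston fibered cone via connectedness and density of integer points is the standard argument.

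One genuine gap: in your blow-up argument you assert that a measure $\mu$ whose asymptotic cycle realizes the boundary direction $v$ has $h(\mu)>0$. That is not automatic---periodic-orbit measures, which are exactly what generate the extreme rays of $\mathcal D$ for a pseudo-Anosov suspension, have zero entropy. The divergence $\mathfrak h(\alpha_n)\to\infty$ does not come from a single measure sitting on the boundary ray; it comes from the fact that $1/\mathfrak h$ is concave, degree~$+1$ homogeneous, and (by Fried's analysis via Markov codings/zeta functions in \cite{Fr}) extends continuously by zero to $\partial\mathcal C$. You do mention this alternative phrasing, so the fix is just to drop the ``$h(\mu)>0$'' claim and cite the continuous extension of $1/\mathfrak h$ directly.
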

We let $\mathcal C_{\mathbb Z} \subset \mathcal C$ denote the primitive integral classes in the fibered cone $\mathcal C$; that is, the integral points which are not nontrivial multiples of another element of $H^1(M;\mathbb Z)$.  These correspond precisely to the connected sections of $(\psi_s)$.

McMullen \cite{Mc} refined the analysis of $\mathfrak h$, proving for example that it is actually real-analytic.  For this, he computed the stretch factors using his \textit{Teichm\"uller polynomial} $\Theta_{\mathcal{C}}$. This polynomial 
\[\Theta_{\mathcal{C}} = \sum_{g\in G}a_g g\]
is an element of the group ring $\mathbb{Z}[G]$ where $G=H_1(M;\mathbb{Z})/\text{torsion}$. For $\alpha \in \mathcal{C}_{\mathbb Z}$, the \textit{specialization} of the Teichm\"uller polynomial is 
\[\Theta_{\mathcal{C}}^\alpha(t) = \sum_{g\in G} a_g t^{\alpha(g)} \in \mathbb{Z}[t^{\pm 1}]\]
where we view $\alpha \in H^1(M;\mathbb Z) \cong \Hom(G;\mathbb Z)$.
Further, $G \cong H \oplus \mathbb{Z}$
where $H=\text{Hom}(H^1(S,\mathbb{Z})^f,\mathbb{Z}) \cong \mathbb{Z}^m$ and $H^1(S,\mathbb{Z})^f$ are the $f$--invariant cohomology classes. So we can regard $\Theta_{\mathcal{C}}$ as a Laurent polynomial on the generators $x_1, x_2, \ldots, x_m$ of $H$ and the generator $u$ of $\mathbb{Z}$. Then specialization to the dual of an element $(a_1, a_2, \ldots, a_m, b) \in \mathcal{C} \cap H^1(M;\mathbb{Z})$ amounts to setting $x_i=t^{a_i}$ for $1\leq i \leq m$ and $u=t^b$. 
McMullen proves that the specializations and the pseudo-Anosov first return maps are related by the following.

\begin{theorem} \label{T:Teich poly}
For any $\alpha \in \C_{\mathbb Z}$, the stretch factor $\lambda(f_\alpha)$ is a root of $\Theta_{\mathcal{C}}^{\alpha}$ with the largest modulus.
\end{theorem}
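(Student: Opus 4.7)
The plan is to follow McMullen's train-track and branched-surface approach, expressing $f_\alpha$ combinatorially via a twisted transition matrix on the maximal free abelian cover of $M$. First, I would recall that $\Theta_{\mathcal{C}}$ arises from an $f$-invariant train track $\tau \subset S$ carrying the unstable foliation of $f$, on whose branches $f$ acts by a nonnegative integer Perron-Frobenius matrix with leading eigenvalue $\lambda(f)$. Suspending $\tau$ by the flow $\psi_s$ yields a branched surface $B \subset M$ transverse to $\xi$ and preserved (forwards) by the flow; each branch of $\tau$ sweeps out a rectangle of $B$.

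Second, I would lift $B$ to the maximal free abelian cover $\tilde M \to M$ corresponding to $G = H_1(M;\mathbb Z)/\mathrm{torsion}$. The set of rectangles of $\tilde B$ forms a free $\mathbb Z[G]$-module on a basis indexed by the branches of $\tau$, and recording, for each rectangle and each next-rectangle image of the lifted flow, the group element $g \in G$ translating the basis lift to the image lift produces a $\mathbb Z[G]$-linear endomorphism whose matrix $P$ has nonnegative entries of the form $\sum a_g g$. Up to units of $\mathbb Z[G]$, the Teichm\"uller polynomial is $\det(uI - P)$, where $u \in G$ is the image of the suspension direction. Specializing $g \mapsto t^{\alpha(g)}$ for $\alpha \in \mathcal{C}_{\mathbb Z}$ then presents $\Theta_{\mathcal{C}}^\alpha(t)$ as the characteristic polynomial of the nonnegative integer matrix $P^\alpha$ obtained from $P$.

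Third, and this is the main obstacle, I would identify $P^\alpha$ with the honest transition matrix for $f_\alpha$ on an $f_\alpha$-invariant train track $\tau_\alpha \subset S_\alpha$ carrying the unstable foliation of $f_\alpha$. Since $S_\alpha$ is transverse to the flow and $B$ is the suspension of $\tau$, the intersection $\tau_\alpha = B \cap S_\alpha$ is a train track in $S_\alpha$ whose first-return image $f_\alpha(\tau_\alpha)$ lies in $B \cap S_\alpha = \tau_\alpha$ and carries the unstable foliation of $f_\alpha$. To compute its transition matrix, one works in the infinite cyclic cover $M_\alpha \to M$ associated to $\alpha$: there $S_\alpha$ lifts to a $\mathbb Z$-indexed disjoint union of parallel fibers, and the lifted flow takes each lift of a branch of $\tau$ to a union of branches in number equal to the signed count of crossings with consecutive lifts of $S_\alpha$. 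Since $\alpha(g)$ records exactly this signed crossing number for the path representing $g$, the $f_\alpha$-transition matrix on the branches of $\tau_\alpha$ coincides entry-by-entry with $P^\alpha$; this bookkeeping between group-ring entries of $P$ and branch-multiplicities of $\tau_\alpha$ under first return is the step that requires genuine care.

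Finally, since $\tau_\alpha$ carries the unstable foliation of the pseudo-Anosov $f_\alpha$ and $P^\alpha$ is a nonnegative integer matrix, Perron-Frobenius identifies $\lambda(f_\alpha)$ with its spectral radius, and hence with the largest-modulus root of its characteristic polynomial $\Theta_{\mathcal{C}}^\alpha(t)$, as claimed.
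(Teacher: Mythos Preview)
The paper does not give its own proof of this statement: it is quoted as a theorem of McMullen from \cite{Mc} and used as a black box. Your outline is a faithful sketch of McMullen's original argument (invariant train track, suspension to a branched surface, lift to the free abelian cover, $\mathbb Z[G]$-matrix $P$ with $\Theta_{\mathcal C}=\det(uI-P)$, then specialization and Perron--Frobenius), so it is correct and in fact supplies more than the paper itself does here.
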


Combining the linearity of $\| \cdot \|_T$ on $\mathcal C$ together with the homogeneity of $\mathfrak h$, we have the following observation of McMullen; see \cite{Mc}.
\begin{corollary} The function $\alpha \mapsto \|\alpha\|_T \mathfrak h(\alpha)$ is continuous and constant on rays from $0$.  In particular, if $K \subset \mathcal C$ is any compact subset, then $\| \cdot \|_T \mathfrak h(\cdot)$ is bounded on $\mathbb R_+ K$.
\end{corollary}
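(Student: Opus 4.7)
The plan is essentially to combine the homogeneity of the two factors, which have opposite degrees and hence cancel. Continuity is immediate from the fact that both factors are separately continuous on $\mathcal{C}$: the norm $\|\cdot\|_T$ is continuous on all of $H^1(M)$ (as a norm), and Fried's theorem (Theorem~\ref{T:Fried cone}) states that $\mathfrak{h}$ is continuous on $\mathcal{C}$. So the product $\alpha \mapsto \|\alpha\|_T \mathfrak{h}(\alpha)$ is continuous on $\mathcal{C}$.

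For the ``constant on rays'' part, I would fix $t > 0$ and $\alpha \in \mathcal{C}$ and compute directly. Thurston's Theorem~\ref{T:Thurston cone} says $\|\cdot\|_T$ restricts to a linear function on the fibered cone $\mathcal{C}$, which in particular gives positive homogeneity of degree $1$: $\|t\alpha\|_T = t \|\alpha\|_T$. Fried's theorem gives homogeneity of degree $-1$ for $\mathfrak{h}$: $\mathfrak{h}(t\alpha) = t^{-1}\mathfrak{h}(\alpha)$. Multiplying,
\[
\|t\alpha\|_T \, \mathfrak{h}(t\alpha) = (t\|\alpha\|_T)(t^{-1}\mathfrak{h}(\alpha)) = \|\alpha\|_T \, \mathfrak{h}(\alpha),
\]
so the function is constant along each ray from $0$ contained in $\mathcal{C}$.

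For the final statement, let $K \subset \mathcal{C}$ be compact. By continuity and compactness, $\|\cdot\|_T \mathfrak{h}(\cdot)$ attains a maximum $M$ on $K$. Any point of $\mathbb{R}_+ K$ has the form $t\alpha$ with $t > 0$ and $\alpha \in K$, and since $\mathcal{C}$ is a cone, $t\alpha \in \mathcal{C}$; then by the ray-invariance just established, $\|t\alpha\|_T\mathfrak{h}(t\alpha) = \|\alpha\|_T \mathfrak{h}(\alpha) \le M$. Hence the function is bounded on $\mathbb{R}_+ K$ by $M$.

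There is no real obstacle here: the entire argument is a direct consequence of combining Theorems~\ref{T:Thurston cone} and~\ref{T:Fried cone}, and the only ``content'' is the observation that linearity of the norm and degree $-1$ homogeneity of $\mathfrak{h}$ are perfectly complementary.
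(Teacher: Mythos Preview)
Your argument is correct and is exactly the approach the paper indicates in the sentence preceding the corollary: combine the linearity of $\|\cdot\|_T$ on $\mathcal{C}$ with the degree $-1$ homogeneity of $\mathfrak{h}$. The paper does not spell out a proof beyond that remark, and your expansion fills it in faithfully; one cosmetic point is that you use $M$ for the maximum while the paper reserves $M$ for the $3$--manifold, so you may wish to rename it.
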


The key corollary for us is the following, also observed by McMullen from the same paper.
\begin{corollary} \label{C:asymptotics} If $\{ \alpha_n\}_n \subset \mathcal C_{\mathbb Z}$ is any infinite sequence of distinct elements, then $|\chi(S_{\alpha_n})| \to \infty$ and if the rays $\mathbb R_+ \alpha_n$ do not accumulate on $\partial \mathcal C$, then
\[ \log(\lambda(f_{\alpha_n})) \asymp \frac{1}{|\chi(S_{\alpha_n})|}. \] 
In particular, $\lambda(f_{\alpha_n}) \to 1$.
\end{corollary}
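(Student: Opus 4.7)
The first assertion, that $|\chi(S_{\alpha_n})| \to \infty$, is essentially purely lattice-theoretic. The plan is to note that $|\chi(S_{\alpha_n})| = \|\alpha_n\|_T$ by Theorem~\ref{T:Thurston cone}, and then invoke the fact that $H^1(M;\mathbb Z)$ is a discrete lattice in $H^1(M)$: any ball in $H^1(M)$ with respect to any norm contains only finitely many lattice points, hence only finitely many primitive integral classes. So an infinite sequence of distinct elements of $\mathcal C_{\mathbb Z}$ must escape every bounded region, giving $\|\alpha_n\|_T \to \infty$ by equivalence of norms on the finite dimensional vector space $H^1(M)$.

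For the comparison $\log(\lambda(f_{\alpha_n})) \asymp 1/|\chi(S_{\alpha_n})|$, I would lean on the preceding corollary together with Theorem~\ref{T:Fried cone}. The function $g(\alpha) := \|\alpha\|_T \mathfrak{h}(\alpha)$ is homogeneous of degree $0$ on $\mathcal C$ (since $\|\cdot\|_T$ is homogeneous of degree $+1$ on $\mathcal C$ and $\mathfrak{h}$ is homogeneous of degree $-1$), hence descends to a continuous, strictly positive function on the projectivized cone, i.e., on the open face $F$. Equivalently, $g(\alpha) = \mathfrak{h}(\bar\alpha)$ when we normalize so $\bar\alpha \in F$ satisfies $\|\bar\alpha\|_T = 1$.

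The hypothesis that the rays $\mathbb R_+\alpha_n$ do not accumulate on $\partial\mathcal C$ means exactly that the projections $\bar\alpha_n$ lie in a compact subset $K \subset F$. Since $\mathfrak{h}$ is continuous and positive on $\mathcal C$, its restriction to the compact set $K$ is bounded above and below by positive constants; by homogeneity this yields
\[
\mathfrak{h}(\alpha_n) \;=\; \frac{\mathfrak{h}(\bar\alpha_n)}{\|\alpha_n\|_T} \;\asymp\; \frac{1}{\|\alpha_n\|_T} \;=\; \frac{1}{|\chi(S_{\alpha_n})|}.
\]
Combined with $\mathfrak{h}(\alpha_n) = \log(\lambda(f_{\alpha_n}))$ from Theorem~\ref{T:Fried cone}, this gives the claimed coarse equivalence, and the final assertion $\lambda(f_{\alpha_n}) \to 1$ follows by combining with the first part of the statement.

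The main (minor) obstacle is verifying the lower bound on $g$; the previous corollary as quoted only gives the upper bound on $\|\cdot\|_T \mathfrak{h}(\cdot)$ over $\mathbb R_+ K$, but the lower bound follows immediately from continuity and positivity of $\mathfrak{h}$ on $\mathcal C$, after noting again that $g$ is constant on rays and hence determined by its values on the compact set $K \subset F$. No further input beyond Theorems~\ref{T:Thurston cone} and~\ref{T:Fried cone}, plus the quoted corollary, is required.
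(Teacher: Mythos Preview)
Your argument is correct and is exactly the natural deduction from the material the paper has set up: the paper itself does not spell out a proof of this corollary, attributing the observation to McMullen, but the preceding corollary (that $\|\cdot\|_T\,\mathfrak h(\cdot)$ is continuous and constant on rays) together with Theorems~\ref{T:Thurston cone} and~\ref{T:Fried cone} is precisely the intended route, and your write-up carries it out cleanly. Your added remark about the lower bound on $g$ (which the paper's stated corollary omits) is the right completion, and your identification of ``rays do not accumulate on $\partial\mathcal C$'' with ``$\bar\alpha_n$ lies in a compact subset of $F$'' is justified since $\bar F$ is compact.
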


\begin{remark}One can sometimes promote the final conclusion to {\em any} infinite sequence of distinct elements, without the assumption about non-accumulation to $\partial \mathcal C$; see the examples in \S\ref{S:examples}.  This is not always the case, and the accumulation set of stretch factors can be fairly complicated, as described by work of Landry-Minsky-Taylor \cite{LaMiTay}. 
\end{remark}

\subsection{Foliations in the fibered cone} \label{S:foliations in cone}

Fried's work described above \cite{Fr,Fr0} implies that any $\alpha \in \mathcal C$ may be represented by a closed $1$--form $\omega_\alpha$ for which $\omega_\alpha(\xi)>0$ at every point of $M$.  For integral classes, $\omega_\alpha$ is the pull-back of the volume form from the fibration over the circle $\mathbb R/\mathbb Z$, and in general, $\omega_\alpha$ is a convex combination of such $1$--forms.  The kernel of $\omega_\alpha$ defines a foliation $\mathcal F_\alpha$ transverse to $\xi$ whose leaves are injectively immersed surfaces $S_\alpha \subset M$.   We consider the reparameterized flow $\{\psi_s^\alpha\}$ defined by scaling the generating vector field $\xi$ by $\xi/\omega_\alpha(\xi)$.  Then for every leaf $S_\alpha \subset M$ of $\mathcal F_\alpha$ and for every $s \in \mathbb R$, the image by the flow $\psi_s^\alpha(S_\alpha)$ is another leaf of $\mathcal F_\alpha$.  The subgroup $H_{\alpha} < \mathbb R$ mentioned in the introduction is precisely the set of return times of $\psi_s^\alpha$ to $S_\alpha$.  As such, $H_\alpha$ acts on $S_\alpha$ so that $s \in H_\alpha$ acts by $s \cdot x = \psi_s^\alpha(x)$, for all $x \in S_\alpha$. 

The group $H_\alpha \cong \mathbb Z^n$ for some $n= n_{\alpha} \leq b_1(M)$, and can alternatively be defined as the set of periods of $\alpha$ (i.e.~the $\alpha$--homomorphic image of $H_1(M;\mathbb Z)$).  A leaf $S_\alpha$ is a closed surface, and in fact a fiber as above if and only if $n_\alpha = 1$ in which case $H_\alpha$ is a discrete subgroup of $\mathbb R$ and $\bar \alpha \in F_{\mathbb Q}$. 
On the other hand, $n_\alpha \geq 2$ if and only if the group of return times $H_\alpha$ is indiscrete, and so $S_\alpha$ is {\em dense} in $M$.


\subsection{Teichm\"uller flows and Veech groups} \label{S:Veech groups}

In \cite{Mc}, McMullen defines a conformal structure and quadratic differential, $(X_\alpha,q_\alpha)$, on the leaves $S_\alpha$ of the foliation $\mathcal F_\alpha$, for all $\alpha \in \C$, with the following properties.  For each $s \in \mathbb R$ and leaf $S_\alpha$, the leaf-to-leaf map $\psi_s^\alpha \colon S_\alpha \to \psi_s^\alpha(S_\alpha)$ is a Teichm\"uller map with initial/terminal quadratic differentials given by $q_\alpha$ on the respective leaves.  In fact, there exists some $K_\alpha > 1$ so that $\psi_s^\alpha$ is a $K_\alpha^{|s|}$--Teichm\"uller map, and hence $K_\alpha^{2|s|}$--quasi-conformal.  

\begin{remark} The notation $(X_\alpha,q_\alpha)$ is somewhat ambiguous: this really denotes a family of structures, one on every leaf, though we abuse notation and also use this same notation to denote the restriction to any given leaf.
\end{remark}

The vertical and horizontal foliations of $q_\alpha$ on the leaves $S_\alpha$ of $\mathcal F_\alpha$ are obtained by intersecting with a {\em fixed} singular foliation on the $3$--manifold; namely, the suspension of the unstable/stable foliations for the original pseudo-Anosov homeomorphism $f$.  In particular, the cone points (i.e.~zeros) of $q_\alpha$ are precisely the intersections of $S_\alpha$ with the $\psi_s$--flowlines through the cone points on the original surface $S$.  Consequently, the cone points are isolated, and the cone angles are bounded by those of the original surface, and are hence bounded independent of $\alpha$.

For $s \in H_\alpha$, $\psi_s^\alpha \colon S_\alpha \to S_\alpha$ is (a remarking) of the Teichm\"uller map, and thus an affine pseudo-Anosov homeomorphism with respect to $q_\alpha$.  In this way, we obtain an isomorphism from $H_\alpha$ to a subgroup $H_\alpha^{\Aff} < \Aff_+(X_\alpha,q_\alpha)$, the group of orientation preserving affine homeomorphisms of the leaf $S_\alpha$ with respect to $(X_\alpha,q_\alpha)$.  The derivative with respect to the preferred coordinates defines a map
\[ D_\alpha \colon \Aff_+(X_\alpha,q_\alpha) \to \GL_2^+(\mathbb R)/\pm I,\]
which is called the {\em Veech group} of $(X_\alpha,q_\alpha)$.
A {\em parabolic} element of $\Aff_+(X_\alpha,q_\alpha)$ is one whose image by $D_\alpha$ is parabolic.

\begin{remark} The preferred coordinates for a quadratic differential are only defined up to translation and rotation through angle $\pi$, so the derivative is only defined up to sign.  If all affine homeomorphisms are area preserving (e.g.~if the surface has finite area) then the derivative maps to $\PSL_2(\mathbb R) = \SL_2(\mathbb R)/\pm I$.
\end{remark}
Since the vertical/horizontal foliations are the stable/unstable foliations, the image of $H_\alpha^\Aff$, which we denote $H_\alpha^D = D_\alpha(H_\alpha^\Aff)$ is contained in the diagonal subgroup of $\PSL_2(\mathbb R)$:
\[ H_\alpha^D < \Delta = \left\{ \left. \left( \begin{array}{cc} a & 0 \\ 0 & \frac1a \end{array}  \right) \in \SL_2(\mathbb R) \,\, \right| \, \, a > 0 \, \, \right\}/\pm I.\]

Define $\SAff(X_\alpha,q_\alpha) < \Aff_+(X_\alpha,q_\alpha)$ to be the area preserving subgroup of orientation preserving affine homeomorphisms; this is the preimage of $\PSL_2(\mathbb R)$ under $D_\alpha$.   In particular, $H_\alpha^\Aff < \SAff(X_\alpha,q_\alpha)$.

\subsection{Trace fields}
A number field is \textit{totally real} if the image of every embedding into $\mathbb{C}$ lies in $\mathbb{R}$. Hubert-Lanneau~\cite{HuLaParabolic} proved the following.

\begin{theorem}\label{theorem: totally real trace field if parabolics} If a nonelementary Veech group contains a parabolic element, then the trace field is totally real.
\end{theorem}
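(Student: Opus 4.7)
\emph{Proof proposal.} The plan is to reduce to showing that every trace $\mathrm{tr}(\gamma)$ for $\gamma \in \Gamma$ (the Veech group) is a totally real algebraic number; since the totally real algebraic numbers form a subfield of $\mathbb{R}$, the field they generate inside $\mathbb{R}$ is itself totally real. Parabolic traces are $\pm 2$, and elliptic elements in a discrete Fuchsian group have finite order, so their traces lie in the maximal real subfield of a cyclotomic field, which is totally real. The substance of the theorem is therefore the hyperbolic case: for every hyperbolic $\Phi \in \Gamma$, one must show that $\lambda(\Phi) + \lambda(\Phi)^{-1}$ has only real Galois conjugates.

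Since $\Gamma$ is nonelementary it contains a hyperbolic element $\Phi$, and then $P' := \Phi P \Phi^{-1}$ is a parabolic whose fixed point on the circle at infinity is distinct from that of $P$ (because $\Phi$ is loxodromic with attracting/repelling fixed points disjoint from $P$'s parabolic fixed point). Under the affine structure on $(X, q)$, these two parabolics fix transverse directions. Classical Veech theory says that in a parabolic direction the surface decomposes into affine cylinders whose moduli are commensurable, and the parabolic is realized as a product of powers of Dehn twists in these cylinders. Hence $P$ and $P'$ yield two transverse, and therefore filling, cylinder decompositions of $(X, q)$.

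The key tool I would use next is the Thurston--Veech construction: any pseudo-Anosov homeomorphism realized as a product of signed Dehn twist powers in two such filling multi-curves has stretch factor $\lambda$ equal to the Perron--Frobenius eigenvalue of an explicit non-negative integer matrix that is conjugate, via a positive diagonal similarity $N^{1/2}$, to a symmetric matrix of the form $N^{1/2} A N^{1/2}$, where $N$ encodes the cylinder moduli and twist coefficients and $A$ is the symmetric non-negative integer intersection matrix of the two multi-curves. Symmetric matrices over $\mathbb{R}$ have real spectrum, so all Galois conjugates of $\lambda$ are real, and consequently $\mathrm{tr}(\Phi) = \pm(\lambda + \lambda^{-1})$ is totally real whenever $\Phi$ is of this Thurston--Veech form.

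The remaining, and I expect the hardest, step is to pass from pseudo-Anosovs of Thurston--Veech form (elements of the subgroup generated by the Dehn twists in both cylinder decompositions) to arbitrary hyperbolic elements of $\Gamma$. Since two parabolics fixing distinct boundary points are already Zariski-dense in $\PSL_2$, I would try to show that the trace field of $\langle P, P' \rangle$ coincides with the trace field $K$ of $\Gamma$; granting this, every generator of $K$ is a trace of a Thurston--Veech pseudo-Anosov, and $K$ is totally real. Making this reduction precise -- matching the trace field of a Zariski-dense subgroup with that of the ambient Veech group -- is the principal obstacle, and likely requires appealing to a structure result on trace fields of Zariski-dense subgroups of $\SL_2$ in the spirit of Bass or Vinberg.
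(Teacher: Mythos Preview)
The paper does not give its own proof of this theorem; it is quoted from Hubert--Lanneau \cite{HuLaParabolic}, with the remark that the key ingredient is the Thurston--Veech construction. Your outline is therefore well aligned with the intended argument, and the cylinder/multitwist picture you describe is exactly the mechanism Hubert--Lanneau use.

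Your self-identified ``principal obstacle'' is not actually an obstacle, and this is the one substantive gap in the proposal. You do not need to show that every hyperbolic element of $\Gamma$ has totally real trace, nor do you need a Bass--Vinberg style statement comparing trace fields of Zariski-dense subgroups. The companion result stated immediately after this theorem in the paper (Theorem~\ref{T:pA trace generates}, due to Kenyon--Smillie and McMullen) says that the trace field of the Veech group is $\mathbb{Q}(\lambda+\lambda^{-1})$ for \emph{any single} pseudo-Anosov element. So once you have produced, via Thurston--Veech, one pseudo-Anosov in $\langle P,P'\rangle\subset\Gamma$ whose stretch factor $\lambda$ has all real Galois conjugates (hence $\mathbb{Q}(\lambda+\lambda^{-1})$ totally real), that field \emph{is} the trace field of $\Gamma$ and you are finished. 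The reduction you were worried about is already supplied by Theorem~\ref{T:pA trace generates}; in fact this pairing of the two theorems is precisely how the paper deploys them in the proof of Theorem~\ref{T:locally finite parabolics}.

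A minor technical point: to get a second parabolic direction transverse to the first, you do not need to first find a hyperbolic $\Phi$; nonelementarity already gives you some $\gamma\in\Gamma$ not fixing the parabolic direction of $P$, and $\gamma P\gamma^{-1}$ works.
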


A pseudo-Anosov $f$ being lonely implies that there are no parabolic elements in the Veech group, but not conversely; see \cite{HuLaMo}.

McMullen~\cite[Corollary~9.6]{McTeichTrace} proved the following fact about the trace field of a Veech group; see also Kenyon-Smillie~\cite{KenSmi}.

\begin{theorem} \label{T:pA trace generates} The trace field of a Veech group containing a pseudo-Anosov is generated by the trace of that pseudo-Anosov.  That is, the trace field is given by $\mathbb Q(\lambda(f) + \lambda(f)^{-1})$.
\end{theorem}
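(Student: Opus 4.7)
The plan is to establish the two inclusions defining the trace field separately.

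For the easy inclusion, $\mathbb{Q}(\lambda(f)+\lambda(f)^{-1}) \subseteq \text{tr.field}$: since $f$ is a pseudo-Anosov element, its preferred-coordinate derivative $D_\alpha(f)$ stretches the vertical and horizontal directions of $q$ by $\lambda(f)$ and $\lambda(f)^{-1}$ respectively, so it is conjugate in $\PSL_2(\mathbb{R})$ to $\mathrm{diag}(\lambda(f),\lambda(f)^{-1})$. Thus $\operatorname{tr}(D_\alpha(f)) = \pm(\lambda(f)+\lambda(f)^{-1})$, and this element lies in the trace field by definition.

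For the reverse inclusion, I would exploit the natural action of the Veech (or affine) group on (relative) homology of the underlying surface. Writing $V = H_1(X, Z(q);\mathbb{R})$, the group $\Aff_+(X,q)$ acts linearly on $V$ preserving the integer lattice $H_1(X,Z(q);\mathbb{Z})$. There is a distinguished two-plane $W \subset V$, namely the span of the ``horizontal'' and ``vertical'' relative cycles (dually the span of the real and imaginary parts of $\sqrt{q}$), and the period map identifies the restriction of the $\Aff_+$-action on $W$ with the derivative homomorphism $D_\alpha$. So for every $g \in \Aff_+(X,q)$, the trace $\operatorname{tr}(D_\alpha(g))$ equals the trace of $g$ acting on $W$.

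The remaining point is to show this trace lies in $K:=\mathbb{Q}(\lambda(f)+\lambda(f)^{-1})$. After complexifying, $W \otimes \mathbb{C}$ is precisely the sum of the $\lambda(f)$- and $\lambda(f)^{-1}$-eigenspaces of $f$ acting on $V\otimes\mathbb{C}$; these two eigenvalues are singled out Galois-theoretically as the unique pair of largest/smallest modulus among the roots of $f$'s characteristic polynomial on $V$. Consequently, $W$ is rational over $K$: it is the sum of the generalized eigenspaces of $f$ corresponding to the single Galois orbit $\{\lambda(f),\lambda(f)^{-1}\}$ over $K$. Any element of $\GL(V_{\mathbb{Z}})$ that preserves this Galois-characterized subspace must act on it with trace in $K$, completing the inclusion.

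The main obstacle is justifying the two-plane's $\Aff_+$-invariance and its correct rational structure. The cleanest route I see is to characterize $W$ intrinsically by the period coordinates so that its $\Aff_+$-invariance is tautological, and then to identify the field of definition of $W$ as $K$ using the Galois-orbit characterization above; this is exactly the strategy carried out in Kenyon--Smillie~\cite{KenSmi} and McMullen~\cite{McTeichTrace}, which I would cite rather than reproduce in detail.
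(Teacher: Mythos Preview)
The paper does not supply its own proof of this theorem; it is quoted as a result of McMullen~\cite[Corollary~9.6]{McTeichTrace} (see also Kenyon--Smillie~\cite{KenSmi}), with only a remark explaining how to pass from abelian to quadratic differentials via the canonical double cover. Your proposal ultimately defers to the same two references and correctly outlines the homological strategy behind them, so the approaches coincide; the one point the paper makes explicit that you treat only informally (through ``$\sqrt{q}$'') is that double-cover reduction, which is what makes the period two-plane and its rational structure well defined when $q$ is not globally the square of an abelian differential.
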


Thus, this trace field is totally real precisely when the trace of the pseudo-Anosov has only real Galois conjugates.

\begin{remark}
    Theorems~\ref{theorem: totally real trace field if parabolics} and \ref{T:pA trace generates} are proved for complex structures with an abelian differential, rather than a quadratic differential.  The proof of Theorem~\ref{theorem: totally real trace field if parabolics} for the more general case of quadratic differentials follows verbatim since the key ingredient is the so-called Thurston-Veech construction, which works for both quadratic differentials and abelian differentials (see \cite[\S6]{Thurston-geomDynSurfs}).  Theorem~\ref{T:pA trace generates} for quadratic differentials follows from the case of abelian differentials since every affine homeomorphism lifts to the canonical $2$--fold cover where a quadratic differential pulls back to a square of an abelian differential, and thus the preimage of the Veech group of the original surface in $\SL_2(\mathbb R)$ is contained in the Veech group for the abelian differential.
\end{remark}

\subsection{Lehmer's Conjecture}

Theorem~\ref{T:locally finite parabolics} is dependent on the validity of what is known as Lehmer's conjecture \cite{Lehmer} though Lehmer did not actually conjecture the statement we will use. See \cite{SmythSurvey}.  To state this conjecture, we need the following.
\begin{defn}
	Let $p(x) \in \mathbb C[x]$ with factorization over $\mathbb C$
	\[
		p(x) = a_0\prod_{i=1}^{m}(x-\gamma_i).
	\] 
	The \textbf{Mahler measure} of $p$ is
	\[
	\mathcal{M}(p) = \left|a_0\right|\prod_{i=1}^{m}(\max{1, |\gamma_i|}).
	\]
\end{defn}
With this definition, we state the conjecture we assume.
\begin{conjecture}[Lehmer] \label{conj:Lehmer}
	There is a constant $\mu > 1$ such that for every $p(x) \in \mathbb{Z}[x]$ with a root not equal to a root of unity $\mathcal{M}(p) \geq \mu$.
\end{conjecture}

Lehmer's Conjecture is known in some special cases, including the following result of Schinzel \cite{Schinzel} which will be important in the proof of Theorem~\ref{T:non-orientable}.

\begin{theorem} \label{T:Schinzel}
If $p(t)$ is the minimal polynomial for an algebraic integer not equal to $0$ or $\pm 1$, all of whose roots are real, then
\[ \mathcal M(p) \geq \left(\tfrac{1+\sqrt{5}}2\right)^{\deg(p)/2}.\]
\end{theorem}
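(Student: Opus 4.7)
Write $\phi=(1+\sqrt 5)/2$ for brevity. The plan is to reduce Schinzel's bound to an assertion about monic integer polynomials with \emph{positive} real roots, and then to extract the golden-ratio constant from a combination of two elementary integrality constraints with a sharp pointwise inequality at each root.

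First I would form the auxiliary polynomial $r(y)\in\mathbb Z[y]$ via the identity $p(x)p(-x)=(-1)^{\deg p}r(x^2)$. Since all roots $\alpha_1,\ldots,\alpha_m$ of $p$ are real and nonzero (as $\alpha\neq 0$), $r$ is monic of degree $m=\deg p$ with positive real roots $\beta_i=\alpha_i^2$; because $\alpha\neq\pm 1$, no conjugate $\alpha_i$ equals $\pm 1$, so no $\beta_i$ equals $1$. A direct computation gives $\mathcal M(r)=\prod_i\max(1,\beta_i)=\mathcal M(p)^2$, so it suffices to prove $\mathcal M(r)\geq\phi^{\,m}$. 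In this setup, integrality of the nonzero integers $r(0)$ and $r(1)$ yields
\[ \prod_i\beta_i \,=\, |r(0)| \,\geq\, 1 \qquad\text{and}\qquad \prod_i|1-\beta_i| \,=\, |r(1)| \,\geq\, 1. \]

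The crux of the argument is to combine these two constraints with a sharp inequality on the summands $\log\max(1,\beta_i)$. The extremal polynomial $y^2-3y+1$ (roots $\phi^2$ and $\phi^{-2}$, Mahler measure $\phi^2$) shows where the bound is saturated, and this pins down the coefficients of the inequality to be proved: one seeks $a,b\geq 0$ such that, for all $\beta>0$ with $\beta\neq 1$,
\[ \log\max(1,\beta) \,\geq\, \log\phi + a\log\beta + b\log|1-\beta|, \]
with equality at $\beta=\phi^{\pm 2}$ (which forces the relation $2a+b=1$). Summing over $i$, nonnegativity of $\sum\log\beta_i$ and $\sum\log|1-\beta_i|$ from the integrality bounds kills the $a,b$ terms and produces $\log\mathcal M(r)\geq m\log\phi$, as desired.

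The main obstacle is the verification of this pointwise inequality: a purely linear combination with coefficients satisfying $2a+b=1$ tends to fail at other positive reals (for instance at integer values such as $\beta=2,3$), so a naive choice will not suffice. To close the gap I would refine the argument by incorporating further integrality data, such as $|r(-1)|\geq 1$ and $|r(k)|\geq 1$ for small integers $k$, or replace the pointwise inequality with a multi-root inequality exploiting that the $\beta_i$ come from Galois orbits of a single integer polynomial. All of the arithmetic content of Schinzel's theorem is concentrated in this step; the reductions and the integrality inputs at $0$ and $1$ are essentially formal.
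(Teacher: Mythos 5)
The paper itself offers no proof of Theorem~\ref{T:Schinzel}: it is quoted as a known result of Schinzel \cite{Schinzel}, so there is no in-paper argument to compare yours against. On its own terms, your reduction is the standard one (essentially the H\"ohn--Skoruppa proof of Schinzel's theorem): passing to $r(y)$ with $p(x)p(-x)=(-1)^{\deg p}r(x^2)$, noting $\mathcal M(r)=\mathcal M(p)^2$, that $r$ is monic of degree $m=\deg p$ with totally positive roots $\beta_i=\alpha_i^2$ none equal to $1$ (irreducibility of $p$ rules out conjugates $0,\pm1$), and feeding in the two integrality inputs $\prod_i\beta_i=|r(0)|\geq 1$ and $\prod_i|1-\beta_i|=|r(1)|\geq 1$. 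All of that is correct and is exactly the right skeleton.

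The genuine gap is the step you yourself flag: the pointwise inequality is never established, and your diagnosis of it is wrong. You assert that a linear combination with $2a+b=1$ ``tends to fail'' at values like $\beta=2,3$ and that extra integrality data (at $-1$, at other integers, or a multi-root refinement) is needed; in fact no such patch is necessary --- one only has to choose the exponents correctly. Take $a=\tfrac{1}{\sqrt5\,\phi}$ and $b=\tfrac{1}{\sqrt5}$ (which do satisfy $2a+b=1$). Then $\max(1,x)\geq \phi\,x^{a}|1-x|^{b}$ for \emph{all} $x>0$, with equality exactly at $x=\phi^{\pm2}$: on $(0,1)$ the function $a\log x+b\log(1-x)$ has its unique maximum at $x=a/(a+b)=\phi^{-2}$, where its value is $-(2a+b)\log\phi=-\log\phi$; on $(1,\infty)$ the function $(1-a)\log x-b\log(x-1)$ tends to $+\infty$ at both ends (since $a+b<1$) and has its unique minimum at $x=(1-a)/(1-a-b)=\phi^{2}$, with value $(2-2a-b)\log\phi=\log\phi$. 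In particular the inequality holds at $x=2,3$ (the right-hand sides are about $1.96$ and $2.99$), where you feared failure. Summing over the $\beta_i$ and using $|r(0)|,|r(1)|\geq1$ then gives $\log\mathcal M(r)\geq m\log\phi$, i.e.\ $\mathcal M(p)\geq\phi^{m/2}$, with the equality case $p(t)=t^2-t-1$ corresponding to $r(y)=y^2-3y+1$ and the equality points $\phi^{\pm2}$. So the strategy is sound and completable exactly as you set it up, but as submitted the proposal omits the entire analytic/arithmetic core (and asserts, incorrectly, that it cannot work in this form), so it is not yet a proof.
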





\section{Examples} \label{S:examples}
Here we provide examples of fibered faces of fibered 3-manifolds and examine arithmetic features of the Veech groups of the corresponding pseudo-Anosov homeomorphisms. 

\subsection{Example 1} \label{ex:hironaka1}
Let $\beta = \sigma_1\sigma_2^{-1}$ be an element of the braid group $B_3$ on three strands (viewed as the mapping class group of a four-punctured sphere, $S$), where $\sigma_1$ and $\sigma_2$ denote the standard generators.  Let $M$ denote the mapping torus of $\beta$. McMullen computes the Teichm\"uller polynomial for this manifold in detail in \cite{Mc}. See also Hironaka \cite{Hironaka}. 

Since $\beta$ permutes the strands of the braid cyclically, $b_1(M)=2$. Choosing appropriate bases, we obtain an isomorphism $H^1(M;\mathbb{Z}) \cong \mathbb{Z}^2$ so that the starting fiber surface $S$ is dual to $(0,1)$, the fibered cone is
\[ \mathcal{C} = \{(a,b)\in \mathbb{R}^2 : b > 0, -b < a < b\}\]
and the Teichm\"uller polynomial for this cone is 
\[\Theta_{\mathcal{C}}(x,u) = u^2 - u(x + 1 + x^{-1}) - 1.\]
Specialization to an integral class $(a,b) \in \mathcal{C}_{\mathbb Z}$ equates to setting $x=t^a$ and $u=t^b$ and yields
\[\Theta_{\mathcal C}^{(a,b)}(t) = \Theta_{\mathcal{C}}(t^a,t^b) = t^{2b}-t^{b+a}-t^{b}-t^{b-a}+1.\]

We used the mathematics software system SageMath \cite{sage} to factor $\Theta_{\mathcal C}^{(a,b)}(t)$ for all primitive integral pairs $(a,b) \in \mathcal{C}$ with $b < 50$, to determine the stretch factors $\lambda_{(a,b)}$ of the corresponding monodromies and their minimal polynomials. We then computed the conjugates of the corresponding traces, $\lambda_{(a,b)}+1/\lambda_{(a,b)}$, to determine whether the trace field of each associated Veech group is totally real. The results are shown in Figure \ref{figure: hironaka cone}. Recall that by Theorem \ref{theorem: totally real trace field if parabolics}, when this trace field is not totally real, the Veech group has no parabolic elements. 

These computations suggest that there are only finitely many pairs $(a,b)$ where the trace field is not totally real.  This is not a coincidence as we will see below.  For this, we record the following improvement on Corollary \ref{C:asymptotics} for the cone $\mathcal C$ for this example.
\begin{lemma} \label{L:Hironaka finite}
For any sequence $\alpha_n = (a_n,b_n) \in {\mathcal C}_{\mathbb Z}$ of distinct elements, we have $\lambda(f_{\alpha_n}) \to 1$.
\end{lemma}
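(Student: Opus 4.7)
My approach is to work directly with the explicit form of the specialized Teichm\"uller polynomial
\[
\Theta_{\mathcal{C}}^{(a,b)}(t) \;=\; t^{2b} - t^{b+a} - t^b - t^{b-a} + 1
\]
and extract an upper bound on $\lambda_n := \lambda(f_{\alpha_n})$ by comparing the sizes of its five terms. Two preliminary reductions clean up the problem. First, the polynomial is invariant under $a \leftrightarrow -a$, so $\lambda_{(a,b)}$ depends only on $(|a|,b)$, and after passing to a subsequence I may assume $a_n \geq 0$. Second, since there are only finitely many primitive integral points with $b \leq N$, a further subsequence gives $b_n \to \infty$.

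Next, by Theorem~\ref{T:Teich poly}, $\lambda_n$ is a root of $\Theta_\mathcal{C}^{(a_n,b_n)}$; setting $\mu_n := \lambda_n^{-1} \in (0,1)$ and dividing $\Theta_\mathcal{C}^{(a_n,b_n)}(\lambda_n)=0$ through by $\lambda_n^{2b_n}$ produces the key identity
\[
1 + \mu_n^{2b_n} \;=\; \mu_n^{b_n-a_n} + \mu_n^{b_n} + \mu_n^{b_n+a_n}.
\]
Because $0 < b_n-a_n \leq b_n \leq b_n+a_n$ and $\mu_n \in (0,1)$, the dominant term on the right is $\mu_n^{b_n-a_n}$, so the identity forces $\mu_n^{b_n-a_n} > 1/3$, equivalently $\lambda_n^{b_n-a_n} < 3$. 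This already yields $\lambda_n \to 1$ along any subsequence for which $b_n - a_n \to \infty$.

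To finish, I would handle the remaining case in which $b_n - a_n$ is bounded. Passing to a subsequence, fix $b_n - a_n = k$ for some positive integer $k$; then the bound above gives $\lambda_n \leq 3^{1/k}$, so I may further assume $\lambda_n \to L \in [1, 3^{1/k}]$. Since $b_n \to \infty$ forces both $b_n$ and $b_n + a_n = 2b_n - k$ to tend to $\infty$, the corresponding terms in the key identity all tend to $0$, leaving $1 = L^{-k}$ and hence $L = 1$. A standard subsequence argument then recovers $\lambda_n \to 1$ for the original sequence.

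The only step that requires care is precisely the regime Corollary~\ref{C:asymptotics} does not cover, namely sequences whose rays $\mathbb{R}_+ \alpha_n$ accumulate on $\partial \mathcal{C}$ (equivalently, $a_n/b_n \to \pm 1$); these are exactly the sequences where $b_n - a_n$ stays bounded. Fortunately, the sparse form of $\Theta_\mathcal{C}^{(a,b)}$ still provides strong enough control in this regime through the bound $\lambda_n \leq 3^{1/k}$, and I do not anticipate needing any further geometric or number-theoretic input beyond this direct comparison of exponents.
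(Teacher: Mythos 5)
Your argument is correct, but it takes a genuinely different route from the paper. The paper exploits the convexity of Fried's function $\mathfrak h$ (Theorem~\ref{T:Fried cone}) to reduce, for each fixed $b$, to the extremal classes $(\pm(b-1),b)$, and then shows by a direct estimate on the single family of specializations $\Theta_{\mathcal C}^{(b-1,b)}(t)=t^{2b}-t^{2b-1}-t^b-t+1$ that these extremal stretch factors tend to $1$; convexity then controls all other classes. You instead work with the general specialization and extract the uniform, explicit bound $\lambda_{(a,b)}^{\,b-|a|}<3$ by dividing the relation $\Theta_{\mathcal C}^{(a,b)}(\lambda)=0$ by $\lambda^{2b}$ and dominating the three right-hand terms by $\mu^{b-|a|}$; this disposes of every subsequence with $b_n-|a_n|\to\infty$ at once, and the boundary regime $b_n-|a_n|=k$ is handled by a limit argument on the same identity. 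Your approach buys a quantitative bound valid for every class and is self-contained: it needs neither the convexity of $\mathfrak h$ nor Corollary~\ref{C:asymptotics}, whereas the paper's use of convexity makes the computation shorter by confining it to one polynomial family. One expository point to tighten: in the fixed-$k$ case, your assertion that $\mu_n^{b_n}$, $\mu_n^{2b_n-k}$, $\mu_n^{2b_n}$ tend to $0$ because their exponents blow up is only justified when $\mu_n$ stays bounded away from $1$, i.e.\ when $L>1$; if $L=1$ there is nothing to prove, so the clean formulation is to assume $L>1$, conclude $1=L^{-k}<1$, and obtain a contradiction (alternatively, note the identity itself forces $\mu_n^{b_n}\le 1-\mu_n^{k}$). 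This is a cosmetic fix, not a gap.
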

\begin{proof} 
Since $\mathfrak h$ is convex, the maximum value of $\mathfrak h(a,b) = \log(\lambda(f_{(a,b)}))$, for points $(a,b) \in \mathcal C_{\mathbb Z}$ and a fixed $b$, occurs at either $(b-1,b)$ or $(1-b,b)$.

First we consider the points of the form $(b-1,b)$.  The specialization of $\Theta_{\mathcal C}$ in this case takes the form
\[ \Theta_{\mathcal C}^{(b-1,b)}(t) = t^{2b} - t^{2b-1} - t^b - t+1.\]
Recall that $\lambda_b  = \lambda(f_{(b-1,b)}) > 1$. As $b \to \infty$, we claim that $\lambda_b \to 1$. Suppose instead that the sequence is bounded below by $1+\epsilon$, for $\epsilon > 0$ on some subsequence. Then in this subsequence we have
\begin{align*}
    \Theta_{\mathcal C}^{(b-1,b)}(\lambda_b)    &=  \lambda_b^{2b}(1 - \lambda_b^{-1} - \lambda_b^{-b} - \lambda_b^{1-2b}) + 1 \\
        &\geq   (1+\epsilon)^{2b}\left(1-(1+\epsilon)^{-1} - (1+\epsilon)^{-b} - (1+\epsilon)^{1-2b}\right)
\end{align*}
The first factor on the right hand side tends to infinity when $b$ does, while the second factor tends toward $1-(1+\epsilon)^{-1} = \epsilon / (1+\epsilon) > 0$. This implies that $\Theta_{\mathcal C}^{(b-1,b)}(\lambda_b)$ approaches infinity, whereas instead it is identically equal to 0. This contradiction proves the claim.


For points of the form $(1-b,b)$, the specialization takes the form
\[ \Theta_{\mathcal C}^{(1-b,b)}(t) = t^{2b} - t - t^b - t^{2b-1} + 1 = \Theta_{\mathcal C}^{(b-1,b)}(t). \]
Therefore, $\lambda(f_{(1-b,b)}) = \lambda(f_{(b-1,b)}) = \lambda_b$ and as $b \to \infty$, these both tend to $1$.
\end{proof}

\begin{figure}
\includegraphics[width=\linewidth, trim = {0 2cm 0 3cm}, clip]{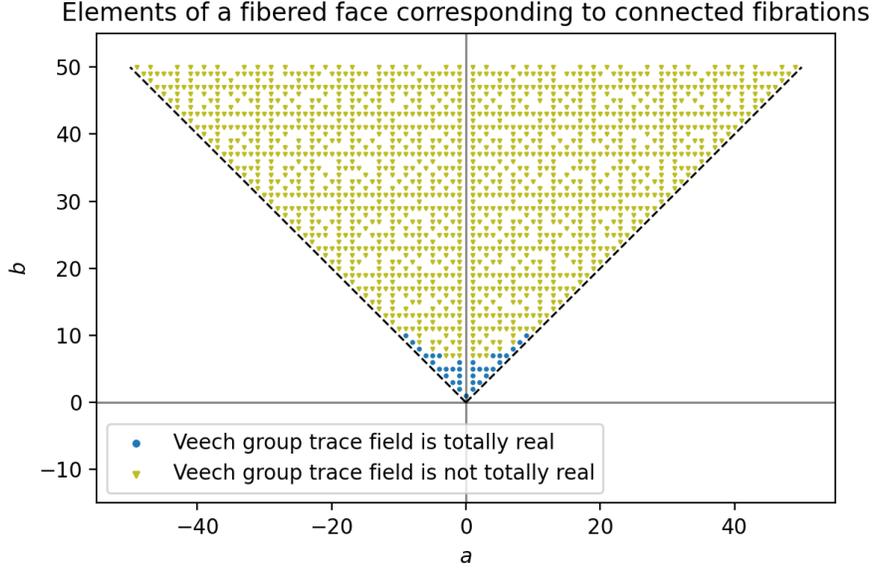}
\caption{Primitive integral elements in a fibered cone for the mapping torus of the three-strand braid $\sigma_1\sigma_2^{-1}$. Elements marked with green triangles have corresponding Veech group with trace field that is not totally real.}
\label{figure: hironaka cone}
\end{figure}

One of the difficulties in the proof of Theorem~\ref{T:locally finite parabolics} is understanding the degrees of the trace field.  This is complicated by the fact that the Teichm\"uller polynomial need not be irreducible in general. For example, when specialized to $(a,b) = (9, 14)$, the Teichm\"uller polynomial in this example splits into the cyclotomic polynomials $t^2 - t + 1$ and $t^4 - t^2 + 1$, plus the minimal polynomial of the corresponding stretch factor. However, in other cases, such as the specialization to $(a,b) = (5,14)$, the Teichm\"uller polynomial remains irreducible. We refer the reader to \cite{FilGar} for more on the factorizations of the specialized polynomials in the example above. 
As we will see in the example below, the Teichm\"uller polynomial also sometimes admits additional non-cyclotomic factors aside from the minimal polynomial of the corresponding stretch factor. 

\subsection{Example 2}
Let $\beta' = \beta^2$, for $\beta$ from the preceding example. Let $M'$ denote the mapping torus on $\beta'$ and ${\theta'}_{\mathcal{C}'}$ the Teichm\"uller polynomial of the fibered cone $\mathcal{C}'$ containing the dual of $\beta'$. Here we will observe three different splitting behaviors of specializations of the Teichm\"uller polynomial. In particular, we see that certain specializations of ${\theta'}_{\mathcal{C}'}$ split into multiple non-cyclotomic factors, limiting what information can be derived about conjugates of the corresponding stretch factors and their traces by looking at the collection of all roots of ${\theta'}_{\mathcal{C}'}$. 

The Teichm\"uller polynomial here is
\[{\theta'}_{\mathcal{C}'}(x,u) = u^2 - u(x^2 + 2x + 1 +2x^{-1} + x^{-2}) + 1\]
over the cone 
\[\mathcal{C} = \{(a,b) \in \mathbb{R}^2 : b > 0, -b/2 < a < b/2\}.\] 
The specialization to $(a,b) = (6,17)$ is irreducible over $\mathbb{Z}$: 
\[t^{34} - t^{29} - 2t^{23} - t^{17} - 2t^{11} - t^5 + 1,\]
while the specialization to $(a,b) = (7, 17)$ splits as a cyclotomic and non-cyclotomic factor:
\begin{multline*}
    (t^4 + t^3 + t^2 + t + 1)  (t^{30} - t^{29} - t^{27} + t^{26} + t^{25} - t^{24} - t^{22} + t^{21} - t^{20} + t^{19} - t^{17} + t^{16}\\
    - t^{15} + t^{14} - t^{13} + t^{11} - t^{10} + t^9 - t^8 - t^6 + t^5 + t^4 - t^3 - t + 1),
\end{multline*}
and the specialization to $(a,b) = (7, 18)$ has multiple non-cyclotomic factors:
\[(t^2 - t + 1) (t^4 + t^3 + t^2 + t + 1) (t^{12} - t^9 - t^8 + t^7 + t^6 + t^5 - t^4 - t^3 + 1) (t^{18} - t^{16} - t^9 - t^2 + 1).\] 
Figure \ref{figure: fibered cone squared} shows whether the Veech groups corresponding to elements of $\mathcal{C}'$ have totally real trace field. For all three specializations described in this example, the corresponding Veech group trace field is not totally real.

The analog to Lemma \ref{L:Hironaka finite} holds in this example as well. $M'$ is a 2-fold cover of $M$ so the stretch factors in $\mathcal{C}_{\mathbb{Z}}'$ are at most squares of the stretch factors in $\mathcal{C}_{\mathbb{Z}}$.


\begin{figure}
\includegraphics[width=\linewidth, trim = {0 2cm 0 3cm}, clip]{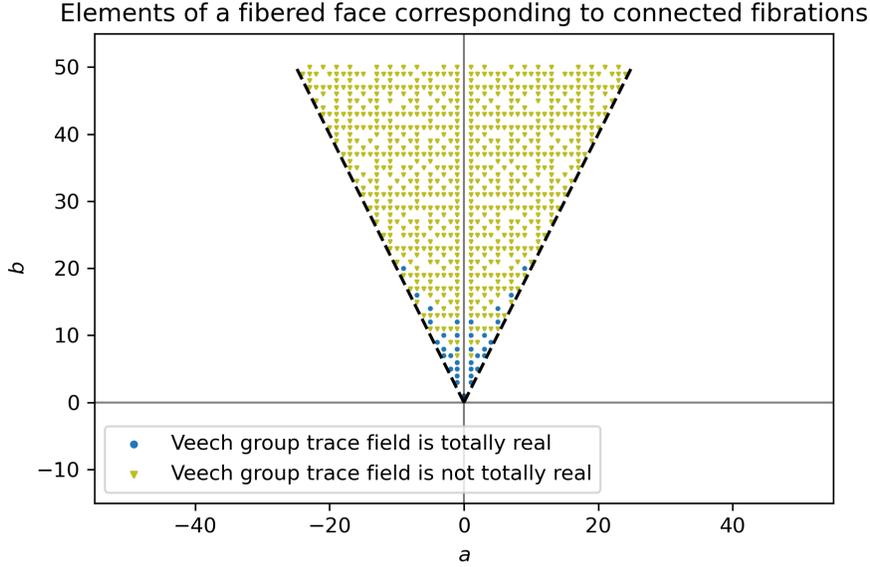}
\caption{Primitive integral elements in a fibered cone for the mapping torus of the three-strand braid $(\sigma_1\sigma_2^{-1})^2$. Elements marked with green triangles have a not totally real corresponding Veech group.}
\label{figure: fibered cone squared}
\end{figure}

\section{Most Veech groups have no parabolics}

We are now ready for the proof of the first theorem from the introduction.\\

\noindent
{\bf Theorem~\ref{T:locally finite parabolics}.} {\em \TParabolicsA}

\begin{proof} Consider any sequence of distinct elements $\alpha_n$ in $\mathcal C_{\mathbb Z}$ such that $\bar \alpha_n$ does not accumulate on $\partial F$.  
We need to show that $\Aff(X_\alpha,q_{\alpha_n})$ contains a parabolic for at most finitely many $n$.  According to Theorem~\ref{theorem: totally real trace field if parabolics}, it suffices to prove that the trace field is totally real for at most finitely many $n$.
Setting $\lambda_n = \lambda(f_{\alpha_n})$, Theorem~\ref{T:pA trace generates} implies that the trace field of $\Aff(X_{\alpha_n},q_{\alpha_n})$ is $\mathbb Q(\lambda_n + \lambda_n^{-1})$.

Next, let $N$ be the number of terms of the Teichm\"uller polynomial, $\Theta_{\mathcal C}$ for $\mathcal C$.
The stretch factor $\lambda_n$ is the largest modulus root of the specialization $\Theta_{\mathcal C}^{\alpha_n}(t)$ by Theorem~\ref{T:Teich poly}.  We observe that this polynomial has no more nonzero terms than $\Theta_{\mathcal C}$, and thus has at most $N$ terms.
Descartes's rule of signs implies that the number of real roots of $\Theta_{\mathcal C}^{\alpha_n}$ is at most $2N-2$.

Suppose that $p_n(t)$ is the minimal polynomial of $\lambda_n$, which is thus a factor of $\Theta_{\mathcal C}^{\alpha_n}(t)$ (up to powers of $t$, which we will ignore).  In particular, note that $\lambda_n$ bounds the modulus of all other roots of $p_n(t)$. The stretch factors are always algebraic integers, and hence $p_n(t)$ is monic.  The Mahler measure is therefore the product of the moduli of the roots outside the unit circle.  There are at most $2N-2$ real roots of $\Theta_{\mathcal C}^{\alpha_n}(t)$, and hence the same is true of $p_n(t)$.  Write
\[ \mathcal M(p_n) = A_nB_n \]
where $A_n$ is the product of the moduli of the {\em real roots} and $B_n$ is the product of the moduli of the non-real roots outside the unit circle (and $1$ if there are none). Thus, we have
\begin{equation} \label{E:real root bound}
    A_n \leq \lambda_n^{2N-2}.
\end{equation}

Now, as $n \to \infty$, we have $|\chi(S_{\alpha_n})| = \| \alpha_n \|_T \to \infty$ as $n \to \infty$.  Since $\bar \alpha_n$ does not accumulate on $\partial F$, Corollary~\ref{C:asymptotics} implies $\lambda_n = \lambda(f_{\alpha_n}) \to 1$. By \eqref{E:real root bound}, it follows that $A_n \to 1$ as $n \to \infty$.  Since we are assuming Lehmer's Conjecture, it follows that $B_n > 1$ for all but finitely many $n$.  That is, there is at least one non-real root $\zeta_n$ of $p_n(t)$ outside the unit circle.  (In fact, the number of such roots tends to infinity linearly with $|\chi(S_{\alpha_n})|$ since $\lambda_n$ has the maximum modulus of any root of $p_n(t)$).

Therefore, for all but finitely many $n$, the embedding of $\mathbb Q(\lambda_n + \lambda_n^{-1})$ to $\mathbb C$ sending $\lambda_n+\lambda_n^{-1}$ to $\zeta_n + \zeta_n^{-1}$ has non-real image, since $\zeta_n$ is non-real and lies off the unit circle.  Therefore, $\mathbb Q(\lambda_n+\lambda_n^{-1})$ is totally real for at most finitely many $n$, as required.
\end{proof}

\begin{remark} The proof of Theorem~\ref{T:locally finite parabolics} follows a strategy of Craig Hodgson, \cite{Hodgson}, for understanding trace fields under hyperbolic Dehn filling.
\end{remark}

The key ingredient is that for sequences $\{\alpha_n\}$ in $\C_{\mathbb Z}$, we have $\lambda(f_{\alpha_n}) \to 1$.  Sometimes this happens for any sequence of distinct elements in the cone, and then one obtains the following stronger result.

\begin{theorem} \label{T:finite parabolics}
Suppose $F$ is the fibered face of an orientable, fibered, hyperbolic $3$--manifold and that $1$ is the only accumulation point of the set \[ \{\lambda(f_\alpha) \mid \bar \alpha \in F_{\mathbb Q}\}.\]
Assuming Lehmer's Conjecture, the set of $\bar \alpha \in F_{\mathbb Q}$  such $\Aff_+(X_\alpha,q_\alpha)$ contains a parabolic element is finite. 
\end{theorem}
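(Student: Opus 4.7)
The plan is to run the argument of Theorem~\ref{T:locally finite parabolics} verbatim, using the strengthened hypothesis to bypass the need to pass to sequences avoiding $\partial F$. The point is that the earlier proof only used the non-accumulation assumption to invoke Corollary~\ref{C:asymptotics} and conclude $\lambda(f_{\alpha_n}) \to 1$; under the new hypothesis this conclusion is available for every infinite sequence of distinct primitive integral classes.

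Concretely, I would argue by contradiction. Suppose the set of $\bar\alpha \in F_{\mathbb Q}$ for which $\Aff_+(X_\alpha,q_\alpha)$ contains a parabolic is infinite, and extract an infinite sequence of distinct such $\bar\alpha_n$ with $\lambda_n = \lambda(f_{\alpha_n})$. By the hypothesis that $1$ is the only accumulation point of the stretch factor set, $\lambda_n \to 1$. Let $N$ denote the number of terms of the Teichm\"uller polynomial $\Theta_{\mathcal C}$; then every specialization $\Theta_{\mathcal C}^{\alpha_n}(t)$ has at most $N$ nonzero terms, so Descartes' rule of signs bounds its real roots by $2N-2$.

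Let $p_n(t)$ be the minimal polynomial of $\lambda_n$, a monic factor of $\Theta_{\mathcal C}^{\alpha_n}(t)$ (up to powers of $t$). Writing $\mathcal M(p_n) = A_n B_n$ as before, where $A_n$ collects the moduli of real roots of $p_n$ outside the unit circle and $B_n$ collects those of the non-real roots, the bound on real roots together with the fact that $\lambda_n$ dominates in modulus all roots of $p_n$ gives $A_n \leq \lambda_n^{2N-2} \to 1$. Lehmer's Conjecture forces $\mathcal M(p_n) \geq \mu > 1$ for all but finitely many $n$, so $B_n > 1$ eventually, yielding a non-real Galois conjugate $\zeta_n$ of $\lambda_n$ off the unit circle. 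Then $\zeta_n + \zeta_n^{-1}$ is a non-real Galois conjugate of $\lambda_n + \lambda_n^{-1}$, so $\mathbb Q(\lambda_n+\lambda_n^{-1})$ is not totally real for all but finitely many $n$. By Theorem~\ref{T:pA trace generates} this is the trace field of $\Aff_+(X_{\alpha_n},q_{\alpha_n})$, and Theorem~\ref{theorem: totally real trace field if parabolics} then forbids parabolic elements in that group, contradicting the choice of the sequence.

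There is essentially no new obstacle to overcome: the proof is a rewrite of Theorem~\ref{T:locally finite parabolics} in which the hypothesis is used precisely at the one step where Corollary~\ref{C:asymptotics} was previously invoked. The only care required is verifying that the sequence can be drawn arbitrarily from $F_{\mathbb Q}$ (so that the hypothesis on the accumulation set applies directly) and noting that the constant $N$ and the validity of Descartes' rule, the algebraic integrality of $\lambda_n$, and Theorems~\ref{theorem: totally real trace field if parabolics} and \ref{T:pA trace generates} are all unaffected by this change.
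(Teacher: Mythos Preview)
Your proposal is correct and follows precisely the paper's own approach: the paper's proof is a one-sentence remark that the argument of Theorem~\ref{T:locally finite parabolics} goes through verbatim, with the hypothesis on the accumulation set replacing the appeal to Corollary~\ref{C:asymptotics} (and hence removing the need to avoid $\partial F$). You have simply spelled this out in detail.
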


\begin{proof} This is exactly the same as the proof of Theorem~\ref{T:locally finite parabolics}, except that the assumption that $1$ is the only accumulation point of $\{\lambda(f_\alpha) \mid \bar \alpha \in F_{\mathbb Q}\}$ replaces the references to Corollary~\ref{C:asymptotics}, and does away with the requirement that $\bar \alpha_n$ does not accumulate on $\partial F$.
\end{proof}

Returning to the examples from Section \ref{S:examples}, Lemmas~\ref{L:Hironaka finite} and the discussion in both implies that the hypotheses of Theorem~\ref{T:finite parabolics} are satisfied.  Thus only finitely many elements $\alpha \in \mathcal C_{\mathbb Z}$ are such that $\Aff_+(X_\alpha,q_\alpha)$ can contain parabolics.  We refer the reader to \cite{LaMiTay} for more on accumulation set of $\{\lambda(f_\alpha) \mid \alpha \in \C_{\mathbb Z}\}$


If $p \colon M \to N$ is the orientation double cover of a non-orientable fibered $3$--manifold $N$ with covering involution $\tau \colon M \to M$, then $p^* \colon H^1(N) \to H^1(M)$ is an isomorphism onto the $\tau^*$--fixed subspace.  There is a well-defined Thurston norm on $H^1(N)$, and the induced homomorphism $\pi_1N \to \pi_1S^1 = \mathbb Z$ determines an element $\alpha \in H^1(N)$ which lies in an open cone of a fibered face.  Indeed, the $p^*$--image of this cone is the intersection of $p^*(H^1(N))$ with an open cone on a fibered face $F$ for $M$, or equivalently, the cone over the $\tau^*$--fixed set $F^{\tau} \subset F$; see \cite[Theorem~2.11]{KhPaWi}.  In this setting, and appealing to work of Liechti and Strenner, \cite{LieStr}, we can remove the assumption that Lehmer's Conjecture holds, at the expense of restricting to $F^\tau$.
\begin{theorem} \label{T:non-orientable}
With assumptions above on $M \to N = M/\langle \tau \rangle$, the set of $\bar \alpha \in F^\tau_{\mathbb Q}$ such that $\Aff_+(X_\alpha,q_\alpha)$ contains a parabolic element is discrete in $F^\tau$.
\end{theorem}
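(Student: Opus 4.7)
The plan is to adapt the proof of Theorem~\ref{T:locally finite parabolics}, replacing the appeal to Lehmer's Conjecture with Schinzel's Theorem~\ref{T:Schinzel}, using the $\tau$--equivariance to access a setting where Schinzel's unconditional bound applies.

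First, I would suppose for contradiction that there is a sequence $\{\bar\alpha_n\}$ of distinct elements in $F^\tau_{\mathbb Q}$ accumulating at some $\bar\alpha_\infty \in F^\tau$, and such that $\Aff_+(X_{\alpha_n},q_{\alpha_n})$ contains a parabolic element for every $n$.  Since $F^\tau$ sits inside the open fibered face $F$, the rays $\mathbb R_+\alpha_n$ cannot accumulate on $\partial F$, so Corollary~\ref{C:asymptotics} yields $\lambda_n := \lambda(f_{\alpha_n}) \to 1$.  By Theorem~\ref{theorem: totally real trace field if parabolics} together with Theorem~\ref{T:pA trace generates}, the trace field $\mathbb Q(\lambda_n + \lambda_n^{-1})$ is totally real for every $n$ along this sequence.

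Next I would invoke the work of Liechti-Strenner~\cite{LieStr}.  Because $\alpha_n$ is $\tau^*$--invariant, the monodromy $f_{\alpha_n}$ is $\tau$--equivariant and descends to a pseudo-Anosov homeomorphism on the non-orientable fiber $S_{\alpha_n}/\tau$ of $N$ with the same stretch factor $\lambda_n$.  The relevant consequence of Liechti-Strenner's analysis of stretch factors of pseudo-Anosov homeomorphisms on non-orientable surfaces is that the minimal polynomial $p_n(t) \in \mathbb Z[t]$ of $\lambda_n$ has all real roots.  With $p_n$ monic, integral, and $\lambda_n \neq 0, \pm 1$, Schinzel's Theorem~\ref{T:Schinzel} gives
\[
\mathcal M(p_n) \geq \left(\tfrac{1+\sqrt{5}}{2}\right)^{\deg(p_n)/2}.
\]
On the other hand, $\lambda_n$ is a root of maximum modulus of $p_n$, so every root has modulus at most $\lambda_n$, and hence
\[
\mathcal M(p_n) \leq \lambda_n^{\deg(p_n)}.
\]
Combining the two inequalities yields $\lambda_n \geq \sqrt{(1+\sqrt{5})/2} > 1$, which contradicts $\lambda_n \to 1$ and completes the proof.

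The main obstacle is making precise the Liechti-Strenner input, namely that $\tau$--equivariance of $f_{\alpha_n}$ on the orientation double cover forces all Galois conjugates of $\lambda_n$ to be real, rather than merely forcing the trace field to be totally real.  This is the step that genuinely distinguishes the cone $F^\tau$ from a general fibered face and allows Schinzel's theorem to substitute unconditionally for Lehmer's Conjecture.  Everything else is parallel to the proof of Theorem~\ref{T:locally finite parabolics}: the asymptotic input from Corollary~\ref{C:asymptotics} is unchanged, and the Mahler measure comparison is actually simpler here, since one does not need to quarantine the real and non-real roots via Descartes's rule of signs.
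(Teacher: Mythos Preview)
Your argument has a genuine gap at the Liechti--Strenner step: the input you claim is not what \cite{LieStr} provides. The relevant consequence of their work (see \cite[Theorem~1.10]{LieStr}) is that the minimal polynomial $p_n$ of $\lambda_n$ has \emph{no roots on the unit circle}, not that all of its roots are real. Your version cannot hold: if every Galois conjugate of $\lambda_n$ were real, your Mahler-measure comparison would give $\lambda_n \geq \sqrt{(1+\sqrt5)/2}$ for \emph{every} $\bar\alpha_n \in F^\tau_{\mathbb Q}$, with no use made of the parabolic hypothesis, and this directly contradicts Corollary~\ref{C:asymptotics}. (Note also that ``all conjugates of $\lambda_n$ real'' would force $\mathbb Q(\lambda_n+\lambda_n^{-1})$ to be totally real for every $n$, so the Hubert--Lanneau criterion could never obstruct parabolics along $F^\tau$.) In short, the argument proves too much, which is a reliable sign that the borrowed input is misstated.

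The paper's proof runs much closer to that of Theorem~\ref{T:locally finite parabolics} than you anticipate, and in particular does \emph{not} dispense with Descartes's rule of signs. One still bounds the number of real roots of $p_n$ via the Teichm\"uller polynomial, so that the real-root contribution $A_n$ to $\mathcal M(p_n)$ satisfies $A_n \leq \lambda_n^{2N-2} \to 1$. Schinzel's Theorem~\ref{T:Schinzel} is then invoked in contrapositive form: were all roots of $p_n$ real, one would have $\mathcal M(p_n)=A_n \geq ((1+\sqrt5)/2)^{\deg(p_n)/2}$, impossible once $A_n$ is close to $1$. Hence $p_n$ acquires a non-real root $\zeta_n$ for all large $n$. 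Only now does Liechti--Strenner enter, guaranteeing $|\zeta_n|\neq 1$, so that $\zeta_n+\zeta_n^{-1}\notin\mathbb R$ and $\mathbb Q(\lambda_n+\lambda_n^{-1})$ is not totally real; Theorem~\ref{theorem: totally real trace field if parabolics} then rules out parabolics. Thus Schinzel replaces Lehmer not by bounding $\lambda_n$ away from $1$, but by unconditionally producing a non-real conjugate, while the Liechti--Strenner input is precisely what pushes that conjugate off the unit circle.
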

\begin{proof}
For every $\bar \alpha \subset F^\tau_{\mathbb Q}$, the associated monodromy $f_\alpha \colon S_\alpha \to S_\alpha$ is the lift of the monodromy for some fibration of $N$.  Then either $S_\alpha$ covers a non-orientable surface $S_\alpha'$ and $f_\alpha$ is the lift of a pseudo-Anosov homeomorphism on $S_\alpha'$, or else $f_\alpha$ is the square of an orientation reversing pseudo-Anosov homeomorphism.  In either case, \cite[Theorem~1.10]{LieStr} implies that if $p(t)$ is the minimal polynomial for $\lambda(f_\alpha)$, then $p(t)$ has no roots on the unit circle.

Now suppose $\{\bar\alpha_n\} \subset F^\tau_{\mathbb Q}$ is any infinite sequence of distinct elements not accumulating on the boundary of $F$ and $\lambda_n = \lambda(f_{\alpha_n})$.  As in the proof of Theorem~\ref{T:locally finite parabolics}, write $p_n(t)$ for the minimal polynomial and $\mathcal M(p_n) = A_nB_n$.  Again, $A_n \to 1$, and thus by Theorem~\ref{T:Schinzel}, there is a non-real root $\zeta_n$ of $p_n(t)$ for all $n$ sufficiently large (regardless of the behavior of $B_n$). By the previous paragraph $\zeta_n$ is not on the unit circle, and thus $\zeta_n + \zeta_n^{-1} \not \in \mathbb C$, and hence $\mathbb Q(\lambda_n +\lambda_n^{-1})$ is not totally real, proving our result.
\end{proof}

\section{Veech groups of leaves}
We now turn our attention to the non-integral points in the cone and the second theorem from the introduction.

\bigskip

\noindent {\bf Theorem~\ref{T:lonely leaves}.} {\em \TLonely
}  

\bigskip

For the rest of the paper, we assume $M$ is a closed, fibered, hyperbolic $3$--manifold.  The results of this section are only nontrivial if $b_1(M) >1$, since otherwise $F- F_{\mathbb Q} = \emptyset$ for any fibered face $F$ (since in that case $F = F_{\mathbb Q}$ is a point).
Given $\alpha \in F$, we recall that $\psi_s^\alpha$ is the reparameterized flow as in \S\ref{S:foliations in cone}, that sends leaves of $\mathcal F_\alpha$ to leaves. 
 Furthermore, $(X_\alpha,q_\alpha)$ is the leaf-wise conformal structure and quadratic differential, and there is $K_\alpha > 1$ so that $\psi_s^\alpha$ is the $K_\alpha^{|s|}$--Teichm\"uller map, hence $K_\alpha^{2|s|}$--quasi-conformal and $K_\alpha^{|s|}$--bi-Lipschitz.

\begin{lemma} \label{L:compact flow} For any $\alpha \in F - F_{\mathbb Q}$ there exists a compact subsurface $Z \subset S_\alpha$ such that
\[ M = \bigcup_{s \in [0,1]} \psi_s^\alpha(Z).\]
\end{lemma}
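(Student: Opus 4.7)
The plan is to first show that the evaluation map $\Phi \colon S_\alpha \times [0,1] \to M$ defined by $\Phi(x,s) = \psi_s^\alpha(x)$ is surjective, and then use compactness of $M$ together with the fact that $\Phi$ is a local diffeomorphism to extract a compact subset $Z \subset S_\alpha$ with $\Phi(Z \times [0,1]) = M$.

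For surjectivity, I will first show that the unconstrained map $\Psi \colon S_\alpha \times \mathbb R \to M$ is onto. Because $\psi_s^\alpha$ carries leaves of $\mathcal F_\alpha$ to leaves, its image is a union of leaves; by transversality of $\xi_\alpha$ to $\mathcal F_\alpha$, flowing any leaf through a small time interval sweeps out an open flow box, so the image of $\Psi$ is open. The same reasoning applies to any leaf not in the image, so the complement is open as well. Connectedness of $M$ then forces the image to be all of $M$. Now for $p \in M$, consider
\[ B_p = \{ s \in \mathbb R : \psi_{-s}^\alpha(p) \in S_\alpha\}.\]
If $s, s' \in B_p$, then $\psi_{s-s'}^\alpha$ preserves $S_\alpha$ setwise, so $s - s' \in H_\alpha$; conversely $B_p + H_\alpha = B_p$, so $B_p$ is a nonempty coset of $H_\alpha$ in $\mathbb R$. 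Because $\alpha \in F - F_{\mathbb Q}$ forces $n_\alpha \geq 2$, the group $H_\alpha$ is dense in $\mathbb R$, and hence so is $B_p$. In particular $B_p \cap (0,1) \neq \emptyset$, producing $s_p \in (0,1)$ and $x_p \in S_\alpha$ with $\Phi(x_p, s_p) = p$.

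For the compactness reduction, transversality of the flow to $\mathcal F_\alpha$ implies that $\Phi$ is a local diffeomorphism at every point of $S_\alpha \times \mathbb R$. Hence there exists a product neighborhood $V_p \times I_p$ of $(x_p, s_p)$ inside $S_\alpha \times (0,1)$, with $\overline{V_p}$ compact in $S_\alpha$, such that $\Phi(V_p \times I_p)$ is an open neighborhood of $p$ in $M$. Compactness of $M$ yields a finite subcover $\{\Phi(V_{p_i} \times I_{p_i})\}_{i=1}^{k}$. Setting
\[ Z = \bigcup_{i=1}^{k} \overline{V_{p_i}},\]
a compact subset of $S_\alpha$, we obtain $M = \bigcup_{s \in [0,1]} \psi_s^\alpha(Z)$, as required.

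The main obstacle I anticipate is the surjectivity statement, specifically the connectedness argument showing every leaf lies in the $\psi_s^\alpha$-orbit of $S_\alpha$, together with the passage from $B_p \neq \emptyset$ to $B_p \cap (0,1) \neq \emptyset$. This last step is where the hypothesis $\alpha \notin F_{\mathbb Q}$ enters crucially, through density of $H_\alpha$ in $\mathbb R$; without it one could not guarantee that return times into $(0,1)$ exist. Everything else is a routine application of compactness and the transversality of $\xi_\alpha$ to $\mathcal F_\alpha$.
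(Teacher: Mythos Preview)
Your argument is correct and follows the same approach as the paper: both cover $M$ by open sets obtained by flowing compact pieces of $S_\alpha$ through $(0,1)$ and then invoke compactness of $M$ to extract a single compact $Z$. The paper's version is shorter because it cites the density of $S_\alpha$ in $M$ from \S\ref{S:foliations in cone} (which subsumes your connectedness/coset argument for surjectivity of $\Phi$) and organizes the cover via a nested compact exhaustion $\{Z_n\}$ of $S_\alpha$ rather than point-by-point flow boxes.
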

\begin{proof} Choose an exhaustion of $S_\alpha$ by a sequence of compact subsurfaces: 
\[Z_1 \subsetneq Z_2 \subsetneq Z_3 \subsetneq \cdots S_\alpha,  \mbox{ and } \bigcup_{n=1}^\infty Z_n = S_\alpha,
\]
and observe that
\[ \left\{ \bigcup_{s \in (0,1)} \psi_s^\alpha(\mbox{int}(Z_n)) \right\}_{n=1}^\infty \]
is an open cover of $M$ since every leaf is dense.
Since $M$ is compact, the open cover admits a finite subcover of $M$.
As the compact surfaces $Z_n$ are nested, there exists an index $N$ such that for $Z = Z_N$ we have
\[
M = \bigcup_{s \in [0,1]} \psi_s^\alpha(Z). \qedhere
\]
\end{proof}

The isomorphism $H_\alpha \cong H_\alpha^\Aff$ is given by $s \mapsto \psi_s^\alpha|_{S_\alpha}$.
We write
\[ H^\Aff_\alpha[0,1]\subset H^\Aff_\alpha\] for the image of $H_\alpha \cap [0,1]$ under this isomorphism.   Note  that every element of $H^\Aff_\alpha$ is $K_\alpha^2$--quasi-conformal and $K_\alpha$--bi-Lipschitz since $s \leq 1$.  As a consequence of Lemma~\ref{L:compact flow}, we have the following.

\begin{corollary} \label{C:compact translates} For $\alpha \in F- F_{\mathbb Q}$ and $Z \subset S_\alpha$ as in Lemma~\ref{L:compact flow} we have
\[ S_\alpha = \bigcup_{h \in H_\alpha^\Aff[0,1]} h(Z).\]
\end{corollary}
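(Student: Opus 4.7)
The plan is to prove the two inclusions separately, with essentially all content going into the forward direction. The reverse inclusion is immediate: any $h \in H_\alpha^{\Aff}[0,1]$ is, by construction, the restriction to $S_\alpha$ of $\psi_s^\alpha$ for some $s \in H_\alpha \cap [0,1]$, and since $s$ lies in the group of return times, $\psi_s^\alpha$ sends $S_\alpha$ to $S_\alpha$; hence $h(Z) \subseteq h(S_\alpha) = S_\alpha$.

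For the forward inclusion, fix an arbitrary $x \in S_\alpha$ and view it as a point of $M$. Lemma~\ref{L:compact flow} provides $s \in [0,1]$ and $z \in Z \subseteq S_\alpha$ with $x = \psi_s^\alpha(z)$. The key step is to observe that $s$ must actually lie in $H_\alpha$: since $\psi_s^\alpha$ permutes leaves of $\mathcal F_\alpha$, the image $\psi_s^\alpha(S_\alpha)$ is a leaf, and it contains the point $x$ which also lies in the leaf $S_\alpha$; two leaves of a foliation that share a point coincide, so $\psi_s^\alpha(S_\alpha) = S_\alpha$, i.e. $s \in H_\alpha \cap [0,1]$. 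Setting $h = \psi_s^\alpha|_{S_\alpha} \in H_\alpha^{\Aff}[0,1]$, we get $x = h(z) \in h(Z)$, which is the required inclusion.

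There is no real obstacle here: the corollary is a straightforward repackaging of Lemma~\ref{L:compact flow} using the defining property of $H_\alpha$ as the return-time group, together with the basic foliation-theoretic fact that distinct leaves are disjoint. The only point that deserves an explicit sentence in the write-up is the ``leaf-coincidence'' step that upgrades the existence of one flow-intersection point from $Z$ with $S_\alpha$ to the statement that $\psi_s^\alpha$ preserves $S_\alpha$ globally.
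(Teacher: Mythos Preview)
Your proof is correct and follows essentially the same approach as the paper's own proof: both take $x\in S_\alpha$, invoke Lemma~\ref{L:compact flow} to write $x=\psi_s^\alpha(z)$ for some $s\in[0,1]$ and $z\in Z$, and then observe that $s$ must lie in $H_\alpha$ because $\psi_s^\alpha$ carries the leaf through $z$ to the leaf through $x$. You spell out the leaf-coincidence step and the reverse inclusion a bit more explicitly than the paper does, but the argument is the same.
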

\begin{proof}
Let $Z \subset S_\alpha$ be the compact subsurface from Lemma \ref{L:compact flow}, so that for every $x \in S_{\alpha} \subseteq M$, we have $x \in \psi_s^\alpha(Z)$ for some $s \in [0,1]$.
Since $x \in S_{\alpha}$, this implies that $s \in H_{\alpha}$. Therefore
\[ 
S_\alpha = \bigcup_{s \in H_\alpha \cap [0,1]} \psi_s^\alpha(Z) = \bigcup_{h \in H_\alpha^\Aff[0,1]} h(Z). \qedhere
\]
\end{proof}

\begin{corollary} \label{C:bounded geometry} For any $\alpha \in F-F_{\mathbb Q}$ there exists $C >0$ so that for any leaf $S_\alpha$ of $\mathcal F_\alpha$, the geometry of $q_\alpha$ is bounded. Specifically, (1) there is a lower bound on the length of any saddle connection, in particular a lower bound on the distance between any two cone points, (2) all cone points have finite (uniformly bounded) cone angle, and (3) $(X_\alpha,q_\alpha)$ is complete. 
\end{corollary}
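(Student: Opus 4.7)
My plan is to derive all three properties from a single uniform local flatness statement for $q_\alpha$ across all leaves of $\mathcal F_\alpha$, which in turn comes from the compactness of $M$ together with the fact that $q_\alpha$ is cut out on leaves by the intersections with two fixed singular foliations of $M$ (the suspensions of the stable and unstable measured foliations of $f$).

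Property (2) is essentially recorded in \S\ref{S:Veech groups}: the cone points of $q_\alpha$ on any leaf $S_\alpha$ are precisely $S_\alpha \cap \Sigma$, where $\Sigma \subset M$ is the finite disjoint union of closed flow orbits through the cone points of the base surface $S$. The cone angle at a point of $S_\alpha \cap \Sigma$ equals the cone angle of the original structure on $S$ at the corresponding preimage, hence is bounded by the maximum cone angle on $S$.

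The central step is the following uniform local flatness claim: there exists $\epsilon_\alpha > 0$ such that for every leaf $S_\alpha$ of $\mathcal F_\alpha$ and every $x \in S_\alpha$, the intrinsic $q_\alpha$-ball $B(x, \epsilon_\alpha)$ embeds isometrically as a flat disk, which is a cone disk of the appropriate angle if $x \in \Sigma$, and contains no cone point of $q_\alpha$ other than $x$ itself. To prove this I would cover $M$ by finitely many flow-boxes $U_i$ for $\psi_s^\alpha$, each of the form $D^2 \times I_i$ with leaves of $\mathcal F_\alpha$ intersecting $U_i$ in plaques $D^2 \times \{t\}$, and chosen small enough that $U_i$ either misses $\Sigma$ or meets a single component of $\Sigma$ in a single transverse arc. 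Within each $U_i$ the leafwise flat metric on the base plaque $D^2 \times \{t_0\}$ is a fixed quadratic differential metric, and the leafwise flat metrics on the other plaques are bi-Lipschitz equivalent to it with constants controlled by the width of $I_i$ (since the flow acts as a Teichm\"uller map between plaques). Continuity of the transverse measures on $\tilde{\mathcal F}^{u,s}$ together with compactness of the closure of $U_i$ then produces a uniform positive radius $\epsilon_i$ for which the ball on any plaque in $U_i$ is an embedded flat (cone) disk with the stated property; taking $\epsilon_\alpha = \min_i \epsilon_i$ completes the step.

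Properties (1) and (3) follow quickly. A saddle connection of $q_\alpha$-length less than $\epsilon_\alpha$ would force its two cone-point endpoints to lie in a single flat disk of radius $\epsilon_\alpha$ around one of them, contradicting the absence of other cone points in that disk. For completeness, given a $q_\alpha$-Cauchy sequence $\{x_n\}$ in $S_\alpha$, fix $n_0$ with $d_{q_\alpha}(x_n, x_m) < \epsilon_\alpha/2$ for all $n, m \geq n_0$; the tail then lies inside $\overline{B(x_{n_0}, \epsilon_\alpha/2)}$, a compact flat cone disk, producing a convergent subsequence and hence convergence of the whole sequence. The main obstacle is the uniform local flatness step: it requires unpacking McMullen's leafwise construction of $q_\alpha$ in \cite{Mc} enough to see that the flat cone metric is cut out by a transversely continuous pair of measured foliations on $M$, converting compactness of $M$ into the uniform radius $\epsilon_\alpha$.
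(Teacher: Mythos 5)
Your route is genuinely different from the paper's. The paper never uses flow boxes: it takes the compact subsurface $Z \subset S_\alpha$ of Lemma~\ref{L:compact flow}, notes via Corollary~\ref{C:compact translates} that the translates $h(Z)$ for $h \in H_\alpha^\Aff[0,1]$ cover the leaf and that each such $h$ is $K_\alpha$--bi-Lipschitz, and then transports the obvious compactness bounds on $Z$ (shortest saddle connection met by $Z$, distance from singularities to $\partial Z$, completeness of a compact neighborhood of $Z$) to the whole leaf at the cost of a factor $K_\alpha$. You instead cover the $3$--manifold by finitely many flow boxes and compare plaque metrics along the flow; this is workable, uses the same two inputs (compactness plus the Teichm\"uller/bi-Lipschitz control on $\psi_s^\alpha$), and arguably makes the uniformity over all leaves more transparent since it never privileges one leaf.

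However, as written there are concrete problems. First, your central claim is false as stated: if $x$ is a regular point within distance $\epsilon_\alpha$ of a cone point, then $B(x,\epsilon_\alpha)$ contains that cone point and is not isometric to a flat disk, so no $\epsilon_\alpha>0$ satisfies your statement for \emph{every} $x$. What your flow-box argument actually delivers, and what you should state, is: every intrinsic leafwise ball of radius $\epsilon_\alpha$ lies inside a single plaque of one of the finitely many flow boxes, each plaque containing at most one cone point, of uniformly bounded angle, and having compact closure; that corrected statement still yields (1)--(3). Second, points near the edge of a plaque of $U_i$ have no definite-radius neighborhood inside that plaque, so you need the standard shrinking step (cover $M$ by smaller flow boxes compactly contained in larger ones and work in the larger plaque), together with the observation that if the plaque-distance from $x$ to the plaque boundary exceeds $\epsilon$, then the leafwise ball $B(x,\epsilon)$ coincides with the plaque ball, because a leaf path leaving the plaque must first cross its boundary and so has length at least that plaque-distance; the phrase ``compactness of the closure of $U_i$'' does not by itself address this. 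Third, for (1) you must also exclude a short saddle connection from a cone point to itself: when the two endpoints coincide, ``absence of other cone points in that disk'' gives no contradiction. It does follow from the embedded cone-disk picture, since a straight segment issuing from the apex of a cone is radial and its distance from the apex strictly increases, but this case needs to be said. With these repairs your argument is correct and stands as a legitimate alternative to the paper's proof via $Z$ and its $K_\alpha$--bi-Lipschitz translates.
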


\begin{proof}
Let $S_{\alpha}$ be any leaf, and consider the compact surface $Z$ from Corollary \ref{C:compact translates}.
By making $Z$ slightly larger, we can assume that no singular points of $q_{\alpha}$ lie on the boundary of $Z$.
Denote the set of all singularities of $q_{\alpha}$ by $A$.
Let $d_{\partial Z}(a)$ denote the distance of a singularity $a \in A$ to the boundary of $Z$, and let $d_{Z}(a, b)$ denote the minimal length of a saddle connection in $Z$ between two (not necessarily distinct) singularities $a,b \in A \cap Z$.
Since $Z$ is compact, we have that
\[
\epsilon = \min \left\{ \min_{a, b \in A \cap Z} d_{Z}(a, b), \min_{a \in A} d_{\partial Z}(a) \right\} > 0.
\]
Pick a saddle connection $\omega$ connecting any singularity $a$ to any singularity $b$. 
There exists an $h \in H_\alpha^\Aff[0,1]$ such that $h(Z)$ contains $a$.
Since $h$ is $K_{\alpha}$--bi-Lipschitz, either $\omega$ is contained in $h(Z)$ and has length at least $\epsilon K_\alpha^{-1}$, or it leaves $h(Z)$ and we again deduce that $\omega$ has length at least the distance from $a$ to $\partial h(Z)$, which is at least $\epsilon K_\alpha^{-1}$.  In either case, we obtain a uniform lower bound $\epsilon K_\alpha^{-1}$ to the length of $\omega$, proving (1).

As was noted in Section \ref{S:Veech groups}, we have that all cone points have finite cone angle which proves (2).
Since $Z$ is compact, there is an $\epsilon'$ so that the $\epsilon'$--neighborhood of $Z$ also has compact closure, which is thus complete.  Any Cauchy sequence has a tail that is contained in the $h$-image of the closure of this neighborhood for some $h \in H_\alpha^\Aff[0,1]$.  Since this $h$--image is also complete, the Cauchy sequence converges, and we have that $(X_\alpha,q_\alpha)$ is complete which proves (3).
\end{proof}

\begin{remark}\label{Rmk:TameSurfaces}
    Note that Corollary~\ref{C:bounded geometry} implies that our surfaces are tame in the sense of Definition 2.1 of \cite{PrScVa}.
\end{remark}

An important observation is the following: for any element of $g \in \Aff_+(X_\alpha,q_\alpha)$, we can choose some element $h \in H^\Aff_{\alpha}[0,1]$ so that $h \circ g(Z) \cap Z \neq \emptyset$, and furthermore, if $g$ is $K$--quasi-conformal, then $h\circ g$ is $(KK_\alpha^2)$--quasi-conformal.

\begin{proposition} \label{P:constant subsequences} Suppose $\alpha \in F-F_{\mathbb Q}$, $K_0 > 1$, and $\{g_n\}_{n=1}^\infty \subset \Aff_+(X_\alpha,q_\alpha)$ is a sequence of elements with $K(g_n) \leq K_0$. Then there is a subsequence $\{g_{n_k}\}_{k=0}^\infty$ and $\{h_{n_k}\}_{k=0}^\infty \subset H_{\alpha}^\Aff[0,1]$ so that $h_{n_k} \circ g_{n_k} = h_{n_0} \circ g_{n_0}$ for all $k \geq 0$.

\end{proposition}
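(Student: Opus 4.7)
The plan is to use the observation preceding the proposition to normalize each $g_n$ into a map whose image intersects the compact set $Z$ from Lemma~\ref{L:compact flow}, then exploit the bounded geometry of $(X_\alpha,q_\alpha)$ together with the rigidity of affine homeomorphisms to extract a subsequence on which the normalizations are literally equal. First I would fix a cone point $p\in Z$ and two saddle connections $\gamma_1,\gamma_2$ based at $p$ whose holonomy vectors $\vec u_1,\vec u_2\in\mathbb R^2$ are linearly independent. By the observation, for each $n$ choose $h_n\in H_\alpha^\Aff[0,1]$ so that $f_n:=h_n\circ g_n$ is $(K_0K_\alpha^2)$--quasi-conformal and $f_n(Z)\cap Z\neq\emptyset$.

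The central step is to show that $\{Df_n\}$ lies in a compact subset of $\GL_2^+(\mathbb R)$. Since $f_n$ sends saddle connections to saddle connections, and every saddle connection of $(X_\alpha,q_\alpha)$ has length at least $\delta>0$ by Corollary~\ref{C:bounded geometry}, we have $\|Df_n\vec u_i\|\geq\delta$ for $i=1,2$. This gives a uniform lower bound on the larger singular value of $Df_n$, and combined with the quasi-conformal bound also on the smaller one. Applying the same reasoning to $f_n^{-1}$ (which satisfies the identical intersection property $f_n^{-1}(Z)\cap Z\neq\emptyset$) yields the corresponding upper bounds. Hence $Df_n$ lies in a compact subset of $\GL_2^+(\mathbb R)$, the $f_n$ are uniformly $L$--bi-Lipschitz for some $L>0$, and $f_n(Z)$ lies in a bounded neighborhood $N$ of $Z$.

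From here, rigidity takes over. By bounded geometry, $N$ contains only finitely many cone points; since $f_n(p)\in N$ is a cone point with the same cone angle as $p$, we can pass to a subsequence on which $f_{n_k}(p)$ is some constant $q$. The images $f_{n_k}(\gamma_i)$ are saddle connections based at $f_{n_k}(p)=q$ of length at most $L|\vec u_i|$, and bounded geometry again provides only finitely many such saddle connections, so after a further subsequence $f_{n_k}(\gamma_i)$, and hence $Df_{n_k}(\vec u_i)$, is constant for $i=1,2$. Linear independence of $\vec u_1,\vec u_2$ then forces $Df_{n_k}$ itself to be constant. Since an affine homeomorphism of $(X_\alpha,q_\alpha)$ is determined by its derivative together with the image of a single point, $f_{n_k}=f_{n_0}$ on the subsequence, as required.

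The main obstacle is the derivative bound in Step 2: the quasi-conformal hypothesis only constrains the ratio of singular values, and on an infinite-area surface there is no a priori upper or lower bound on the absolute scale of an affine homeomorphism's derivative. The essential rigidity comes from the bounded geometry of $(X_\alpha,q_\alpha)$ supplied by Corollary~\ref{C:bounded geometry}: the lower bound on saddle connection length prevents any holonomy vector from being sent to a vector of length less than $\delta$, so $Df_n$ cannot shrink transverse directions simultaneously. A secondary technical point is the existence of the distinguished saddle connections $\gamma_1,\gamma_2$ at $p$ with linearly independent holonomies, which I would justify via density of saddle connection directions in a surface with bounded geometry.
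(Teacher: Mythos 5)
Your argument is correct, and its overall skeleton is the one the paper uses: normalize each $g_n$ by some $h_n\in H_\alpha^{\Aff}[0,1]$ so that $f_n=h_n\circ g_n$ satisfies $f_n(Z)\cap Z\neq\emptyset$, then combine the bounded geometry of Corollary~\ref{C:bounded geometry} with the fact that an affine homeomorphism is determined by its effect on two saddle connections in independent directions. The difference is in the middle step. The paper invokes compactness of uniformly quasi-conformal maps to extract a subsequence converging uniformly on compacta, and then uses the uniform separation of cone points and the discreteness of saddle connections to upgrade convergence to eventual equality. You instead bound the derivatives $Df_n$ directly: the uniform lower bound $\delta$ on saddle-connection lengths, applied to $f_n(\gamma_i)$ and to $f_n^{-1}(\gamma_i)$, traps both singular values of $Df_n$ between positive constants, after which a finiteness count (finitely many cone points in a bounded neighborhood of $Z$, finitely many saddle connections of bounded length based at a fixed cone point, both using completeness and uniform separation from Corollary~\ref{C:bounded geometry}) replaces the normal-families argument entirely. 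This is a legitimate and in one respect more careful route: a uniformly quasi-conformal family of affine maps of an infinite-area surface need not be precompact without some control on $\det Df_n$ (the maps $\lambda I$ with $\lambda\to\infty$ are conformal), and your singular-value bound is precisely what rules this out for general elements of $\Aff_+(X_\alpha,q_\alpha)$ rather than only for $\SAff(X_\alpha,q_\alpha)$. Two small points to tidy: arrange that $Z$ actually contains a cone point (enlarge $Z$, or translate a cone point into $Z$ via Corollary~\ref{C:compact translates}), and the existence of two saddle connections at $p$ with linearly independent holonomies deserves a sentence (e.g.\ shortest saddle connections leaving $p$ in two different sectors of its cone), though the paper asserts this without proof as well.
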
 \label{P:}
\begin{proof} From the observation before the statement, we can find $h_n \in H_{\alpha}^\Aff[0,1]$ so that $h_n \circ g_n(Z) \cap Z \neq \emptyset$.  Next, observe that $h_n \circ g_n$ is $(K_0K_{\alpha}^2)$--quasi-conformal, so by compactness of quasi-conformal maps, after passing to a subsequence, $h_{n_k} \circ g_{n_k}$ converges uniformly on compact sets to a map $f$.  The maps $h_{n_k} \circ g_{n_k}$ are affine, so they must map cone points to cone points.  Since the cone points are uniformly separated by Corollary~\ref{C:bounded geometry}, there are a pair of cone points $a,b$ so that for $k$ sufficiently large $h_{n_k} \circ g_{n_k}(a) = b$.  Moreover, if we pick a pair of saddle connections in linearly independent directions emanating from $a$, then for $n$ sufficiently large $h_{n_k} \circ g_{n_k}$ all agree on this pair, again by Corollary~\ref{C:bounded geometry}.  But these conditions uniquely determines the affine homeomorphism, and hence $h_{n_k} \circ g_{n_k}$ is eventually constant, and passing to a tail-subsequence of this subsequence completes the proof.
\end{proof}

From this we can prove a special case of Theorem~\ref{T:lonely leaves}:

\begin{proposition}\label{Prop:H_alphaFI} If $\alpha \in F-F_{\mathbb Q}$, then $H_{\alpha}^\Aff$ has finite index in $\SAff(X_\alpha,q_\alpha)$.
\end{proposition}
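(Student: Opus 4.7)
The plan is to argue by contradiction. Suppose $[\SAff : H_\alpha^\Aff] = \infty$ and choose $g_n \in \SAff$ representing pairwise distinct right cosets $H_\alpha^\Aff g_n$. Since left multiplication by an element of $H_\alpha^\Aff$ preserves a right coset, my strategy is to modify each $g_n$ in this way to bring the sequence into the hypotheses of Proposition~\ref{P:constant subsequences}; that proposition will then place some subsequence $\{g_{n_k}\}$ into a common right coset, contradicting the choice.

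As a first reduction, the observation immediately preceding Proposition~\ref{P:constant subsequences} lets me replace each $g_n$ by $h_n g_n$ for some $h_n \in H_\alpha^\Aff[0,1]$ so that $g_n(Z) \cap Z \neq \emptyset$ for every $n$; this does not alter the right coset $H_\alpha^\Aff g_n$. Henceforth I assume this condition holds.

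The crux is to further modify each $g_n$ by left multiplication by an element of $H_\alpha^\Aff$, still within its right coset, so as to obtain a uniform bound $K(g_n) \leq K_0$. For this I exploit that $D_\alpha(H_\alpha^\Aff) = H_\alpha^D$ is dense in the diagonal subgroup $\Delta < \PSL_2(\mathbb R)$, so derivatives of elements of $H_\alpha^\Aff$ can approximate any prescribed diagonal matrix arbitrarily closely. Writing $D_\alpha(g_n)$ in a $KAK$-type decomposition and choosing $h \in H_\alpha^\Aff$ with $D_\alpha(h)$ near the inverse of the left diagonal factor, the product $D_\alpha(hg_n)$ is, up to a rotation, the remaining diagonal factor, so $K(hg_n)$ is essentially governed by that residual factor. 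The remaining geometric input---that this residual factor is bounded uniformly over the sequence---should follow from combining the normalization $g_n(Z) \cap Z \neq \emptyset$, area preservation of elements of $\SAff$, and the uniformly bounded local geometry furnished by Corollary~\ref{C:bounded geometry}: an excessively distorted area-preserving affine self-map would send $Z$ to an arbitrarily long, thin region whose cone points cannot be compatibly identified with those of $Z$.

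With the uniform bound $K(g_n) \leq K_0$ in place, Proposition~\ref{P:constant subsequences} produces a subsequence $\{g_{n_k}\}$ and $h'_{n_k} \in H_\alpha^\Aff[0,1]$ with $h'_{n_k} g_{n_k} = h'_{n_0} g_{n_0}$ for all $k$, whence $g_{n_k} \in H_\alpha^\Aff g_{n_0}$, contradicting the assumption of distinct right cosets. The main obstacle is the uniform quasi-conformal bound; concretely, the hard part is showing that the hyperbolic distance from $D_\alpha(g_n)\cdot i \in \mathbb H^2$ to the imaginary axis (the $\Delta$-axis) is uniformly bounded when $g_n(Z)\cap Z \neq \emptyset$. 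This is exactly the geometric rigidity point: that the leaves $S_\alpha$, because they come from a dense foliation of the compact hyperbolic $3$--manifold $M$, cannot admit area-preserving affine self-maps of unbounded transverse distortion in a way compatible with the tiling of $S_\alpha$ by the $H_\alpha^\Aff[0,1]$-translates of $Z$.
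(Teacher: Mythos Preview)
Your overall architecture is correct: assume infinite index, produce a sequence of coset representatives satisfying the hypotheses of Proposition~\ref{P:constant subsequences}, and derive a contradiction. The paper does exactly this. The difficulty, as you yourself flag, is the uniform bound $K(g_n)\le K_0$, and here your proposal has a genuine gap.

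Left multiplication by $H_\alpha^\Aff$ moves $D_\alpha(g_n)\cdot i$ along the $\Delta$--orbit of that point, i.e.\ along a geodesic ray through the origin in $\mathbb H^2$; the best you can achieve is $K(hg_n)\approx e^{d(\text{imag.\ axis},\,D_\alpha(g_n)\cdot i)}$. So you must show this transverse distance is uniformly bounded. Your heuristic---that a highly distorted area-preserving map sends $Z$ to a long thin strip whose cone points cannot be compatibly identified with those of $Z$---does not close the argument: $g_n(Z)$ need only \emph{meet} $Z$, not match cone points with it, and the infinitely many cone points of $S_\alpha$ spread throughout the leaf impose no local obstruction of this kind. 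Worse, a near-parabolic derivative (e.g.\ upper triangular with large off-diagonal entry) can have $g_n(Z)\cap Z\neq\emptyset$ while $D_\alpha(g_n)\cdot i$ escapes along a horocycle, so the normalization $g_n(Z)\cap Z\neq\emptyset$ by itself gives no control on the transverse distance. Finally, your two normalizations interfere: the second left-multiplication by an element of $H_\alpha^\Aff$ with large dilatation will typically destroy the condition $g_n(Z)\cap Z\neq\emptyset$ obtained in the first step.

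The paper sidesteps all of this by passing to the closure $G=\overline{D_\alpha(\SAff(X_\alpha,q_\alpha))}$ in $\PSL_2(\mathbb R)$. Since $H_\alpha^D$ has rank $\ge 2$, it is dense in $\Delta$, so $\Delta\le G$; the classification of closed connected subgroups containing $\Delta$ then gives (up to finite index) three cases: $G=\PSL_2(\mathbb R)$, $G$ upper triangular, or $G=\Delta$. In each case one produces coset representatives $g_n$ with $D_\alpha(g_n)\to I$ directly---in the first two cases because $\Delta$ is a proper submanifold of $G$ and cosets of $\Delta$ are parallel copies not accumulating on $I$; in the third because every $H_\alpha^D$--coset is already dense in $\Delta$. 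This gives $K(g_n)\to 1$ for free, and Proposition~\ref{P:constant subsequences} applies immediately. The Lie-theoretic trichotomy is the missing idea in your approach.
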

\begin{proof} 
Suppose $H_{\alpha}^{\Aff}$ is not finite index, consider the closure of the $D_\alpha$--image in $\PSL_2(\mathbb R)$:
\[ G = \overline{D_\alpha(\SAff(X_\alpha,q_\alpha))}. \]
Since $\alpha \in F- F_{\mathbb Q}$, 
every leaf $S_\alpha$ of $\mathcal F_\alpha$ is dense in $M$.  Therefore $H_{\alpha}^{D}< \Delta \cong \mathbb R$ is an abelian subgroup with rank at least $2$, and hence is dense in $\Delta$.  Consequently, $\Delta < G$.  

By the classification of Lie subalgebras of $\mathfrak{s}\mathfrak{l}_2(\mathbb R)$ (or a direct calculations) we observe that, after replacing $G$ with a finite index subgroup, we must be in one of the following situations:
\begin{enumerate}
\item $G = \PSL_2(\mathbb R)$,
\item $G$ is the subgroup of upper triangular matrices, or
\item $G = \Delta$.
\end{enumerate}
In any case, we claim that there is a sequence of elements $\{g_n\} \subset \SAff(X_\alpha,q_\alpha)$ such that $D_\alpha(g_n) \to I$ in $\PSL_2(\mathbb R)$ and so that $H_{\alpha}^{\Aff}g_n$ are distinct cosets of $H_{\alpha}^{\Aff}$.  Assuming the claim, we prove the proposition.  For this, we simply apply Proposition~\ref{P:constant subsequences}, pass to a subsequence (of the same name) so that $h_n \circ g_n = h_0 \circ g_0$ for all $n \geq 0$.  This contradicts the fact that $\{H_{\alpha}^{\Aff} g_n\}$ are all distinct cosets.

To prove the claim, notice that in the first two cases, a finite index subgroup of $D_\alpha(\SAff(X_\alpha,q_\alpha))$ is dense in the Lie subgroup $G \leq \PSL_2(\mathbb R)$, and $\Delta < G$ is a $1$--dimensional submanifold of $G$, which itself has dimenion $3$ or $2$ in cases (1) and (2), respectively.
This implies that there exists a sequence $\{ g_n \} \in \SAff(X_\alpha,q_\alpha)$ such that $D_{\alpha}(g_n) \rightarrow I$ as $n \rightarrow \infty$ but $D_{\alpha}(g_n) \notin \Delta$.
By way of contradiction, suppose that there exists a subsequence $\{ g_{n_i} \}$ such that $g_{n_i}$ are in the same coset $H_{\alpha}^{\Aff}g$ where $D_{\alpha}(g) \notin \Delta$.
This implies that $D_{\alpha}(g_{n_i}) \subset \Delta D_{\alpha}(g)$, which is a $1$--manifold parallel to $\Delta$ and does not accumulate to $I$. 
This contradicts the fact that $D_{\alpha}(g_{n_i}) \rightarrow I$. 
Therefore, there exists a subsequence of $\{ g_n \}$ such that $\{H_{\alpha}^{\Aff} g_n\}$ are all distinct cosets.


To prove the claim in the final case, we note that by assumption there exists a sequence of distinct cosets $H_\alpha^\Aff b_n^\Aff$ of $H_\alpha^\Aff$ in $\SAff(X_\alpha,q_\alpha)$.  Since both $H_\alpha^D$ and $D_\alpha(\SAff(X_\alpha,q_\alpha))$ are dense in $\Delta$, so is every coset of $H_\alpha^D$.  Therefore, we can find a sequence $\{a_n^\Aff\} \subset H_\alpha^\Aff$ so that $D_\alpha(a_n^\Aff)D_\alpha(b_n^\Aff)\to I$ as $n \to \infty$.  Let $g_n = a_n^\Aff b_n^\Aff$, so that $D_\alpha(a_n^\Aff) \to I$ and $H_\alpha^\Aff g_n$ are distinct cosets of $H_\alpha^\Aff$, as required. This completes the proof of the claim.  Since we already proved the proposition assuming the claim, we are done.
\end{proof}

To complete the proof of Theorem~\ref{T:lonely leaves}, we need only prove the following.
\begin{proposition}
$ \mathrm{Aff}_+(X_{\alpha}, q_{\alpha}) = \mathrm{SAff}(X_{\alpha}, q_{\alpha})$.
\end{proposition}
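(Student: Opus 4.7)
The plan is to argue by contradiction: suppose some $g \in \Aff_+(X_\alpha,q_\alpha)$ satisfies $c := |\det D_\alpha(g)| \neq 1$, and, after possibly replacing $g$ with $g^{-1}$, that $c>1$. The strategy will be to manufacture an infinite sequence $\{g_n\} \subset \Aff_+(X_\alpha,q_\alpha)$ of the form $g_n = g^{m_n} a_n$ with $a_n \in H_\alpha^\Aff$ and $m_n \to \infty$, having uniformly bounded quasi-conformal constant, and then invoke Proposition~\ref{P:constant subsequences}. Combined with the fact that $\SAff(X_\alpha,q_\alpha)$ is the kernel of the homomorphism $|\det|\circ D_\alpha\colon \Aff_+(X_\alpha,q_\alpha) \to \mathbb{R}_+$, this will force $g^{m_{n_k}-m_{n_0}} \in \SAff(X_\alpha,q_\alpha)$ for distinct $m_{n_k}\neq m_{n_0}$, contradicting $c\neq 1$.

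The first step is structural: I would show that $A := D_\alpha(g)$ normalizes the diagonal subgroup $\Delta < \PSL_2(\mathbb{R})$. Because $\SAff$ is normal in $\Aff_+$ (as the kernel of a homomorphism), $gH_\alpha^\Aff g^{-1}$ sits inside $\SAff$ and is finite index there by Proposition~\ref{Prop:H_alphaFI}; hence $L := H_\alpha^\Aff \cap gH_\alpha^\Aff g^{-1}$ is finite index in $H_\alpha^\Aff$, and $D_\alpha(L)$ remains dense in $\Delta$. On the other hand $D_\alpha(L) \subset \Delta \cap A\Delta A^{-1}$, so the closed one-parameter subgroups $\Delta$ and $A\Delta A^{-1}$ would have dense intersection and must therefore coincide. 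This forces $A$ to be either diagonal or anti-diagonal in the preferred coordinates for $q_\alpha$.

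The second step is the sequence construction. If $A$ is anti-diagonal, then $A^2$ is a scalar matrix, and if $A$ is itself a diagonal scalar matrix, every $A^n$ is scalar; in either sub-case I would simply take $g_n = g^{2n}$ (respectively $g^n$), giving $K(g_n)=1$. When $A = \mathrm{diag}(a,d)$ is diagonal with $a \neq d$ and $ad = c$, the powers $A^n$ themselves have unbounded quasi-conformal constant, so the plan is to exploit density of $H_\alpha^D$ in $\Delta \cong \mathbb{R}_+$ and pick $a_n \in H_\alpha^\Aff$ with $D_\alpha(a_n) = \mathrm{diag}(\beta_n,\beta_n^{-1})$ and $\beta_n$ within a uniform multiplicative factor of $(d/a)^{n/2}$. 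Then $A^n D_\alpha(a_n) = \mathrm{diag}(a^n\beta_n,\, d^n\beta_n^{-1})$ will have singular values of comparable size, so $K(g^n a_n)$ is bounded independent of $n$.

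The third step is the endgame. Applying Proposition~\ref{P:constant subsequences} to $\{g_n\}$ would yield a subsequence and elements $h_{n_k} \in H_\alpha^\Aff[0,1]$ with $h_{n_k}g_{n_k} = h_{n_0}g_{n_0}$ for all $k$. Rearranging, $g^{m_{n_k}-m_{n_0}}$ factors as $h_{n_k}^{-1}h_{n_0} \in H_\alpha^\Aff$ times $g^{m_{n_0}}a_{n_0}a_{n_k}^{-1}g^{-m_{n_0}}$; the latter lies in $g^{m_{n_0}}\SAff g^{-m_{n_0}} = \SAff$ by normality. Hence $g^{m_{n_k}-m_{n_0}} \in \SAff$, so $c^{m_{n_k}-m_{n_0}}=1$, which together with $c>1$ and $m_{n_k}\neq m_{n_0}$ for $k$ large is the desired contradiction. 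I expect step~1 to be the main obstacle, since without the identification of $A$ as (anti-)diagonal there is no natural way to approximate $A^n$ using elements of $H_\alpha^D$; the indispensable lever is that $\SAff$, being the kernel of a homomorphism, is automatically normal in $\Aff_+$, so that conjugates of $H_\alpha^\Aff$ remain inside $\SAff$ and the density argument gets off the ground.
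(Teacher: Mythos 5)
Your overall skeleton is sound, and your Step 1 is a clean alternative to the paper's argument: rather than passing to a finite-index subgroup of $\Aff_+(X_\alpha,q_\alpha)$ whose conjugation action preserves $H_\alpha^\Aff$ and then computing $ghg^{-1}$ explicitly, you intersect $H_\alpha^\Aff$ with its $g$--conjugate inside the normal subgroup $\SAff(X_\alpha,q_\alpha)$ and use density of the $D_\alpha$--image of the resulting finite-index subgroup to conclude that $D_\alpha(g)$ normalizes $\Delta$, hence is diagonal or anti-diagonal. That part is correct and matches the paper's conclusion by a slightly different route.

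The gap is in Steps 2--3, where you feed the sequence $g_n=g^{m_n}a_n$ into Proposition~\ref{P:constant subsequences}. Your $g_n$ has derivative $\mathrm{diag}(a^{m_n}\beta_n,\, d^{m_n}\beta_n^{-1})$ with both entries comparable to $c^{m_n/2}\to\infty$: the quasi-conformal constant is bounded, but the maps are uniformly expanding with expansion factor tending to infinity. The proof of Proposition~\ref{P:constant subsequences} extracts a limit of $h_{n}\circ g_{n}$ by ``compactness of quasi-conformal maps,'' and that step requires the maps to be uniformly bi-Lipschitz (which, for bounded $K$, amounts to bounded determinant): a sequence of affine maps whose singular values both tend to infinity sends the compact set $Z$ to sets of diameter tending to infinity and therefore cannot converge uniformly on $Z$, even though each $h_n\circ g_n(Z)$ meets $Z$. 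Worse, for your sequence the \emph{conclusion} of Proposition~\ref{P:constant subsequences} --- comparing determinants on the two sides of $h_{n_k}\circ g_{n_k}=h_{n_0}\circ g_{n_0}$ --- is already equivalent to the nonexistence of $g$, so invoking that proposition in this regime is circular. The repair is short and is essentially what the paper does: your normalized elements (or their inverses) are uniform contractions in both directions, so iterating one of them sends a fixed saddle connection to saddle connections of length tending to $0$, contradicting the uniform lower bound in part (1) of Corollary~\ref{C:bounded geometry}. With that substitution for the endgame, your argument goes through.
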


\begin{proof}  First, observe that $\SAff_+(X_\alpha,q_\alpha)$ is a normal subgroup of $\Aff_+(X_\alpha,q_\alpha)$ since it is precisely the kernel of the homomorphism given by the determinant of the derivative.  In fact, from this homomorphism, either $\Aff_+(X_\alpha,q_\alpha) = \SAff(X_\alpha,q_\alpha)$ or else the index is infinite; $[\Aff_+(X_\alpha,q_\alpha): \SAff(X_\alpha,q_\alpha)] = \infty$.

After passing to a finite index subgroup, $\Gamma < \Aff_+(X_\alpha,q_\alpha)$, if necessary, the conjugation action of $\Gamma$ on $\SAff_+(X_\alpha,q_\alpha)$ preserves the finite index subgroup $H_\alpha^\Aff$ (and without loss of generality, $H_\alpha^\Aff < \Gamma$).  It thus suffices to prove $\Gamma < \SAff_+(X_\alpha,q_\alpha)$, or equivalently, $D_\alpha(\Gamma) < \PSL_2(\mathbb R)$.  

Consider any element
\[ g = \begin{pmatrix}
a & b\\
c & d
\end{pmatrix}  \in D_\alpha(\Gamma) \quad \mbox{ and } \quad h = \begin{pmatrix}
\lambda & 0\\
0 & \lambda^{-1}
\end{pmatrix} \in H_\alpha^D,\]
with $\lambda \neq \pm 1$.
Then $ghg^{-1} \in H_\alpha^D$, and is given by
\begin{equation*}
\begin{aligned}
ghg^{-1} & =
    \frac{1}{\mathrm{det}(g)}\begin{pmatrix}
a & b\\
c & d
\end{pmatrix}
\begin{pmatrix}
\lambda & 0\\
0 & \lambda^{-1}
\end{pmatrix}
\begin{pmatrix}
d & -b\\
-c & a
\end{pmatrix}\\ &= \frac{1}{\mathrm{det}(g)} 
\begin{pmatrix}
ad \lambda - bc \lambda^{-1} & ab(\lambda-\lambda^{-1})\\
cd (\lambda - \lambda^{-1}) & ad\lambda^{-1}-bc \lambda
\end{pmatrix}.
\end{aligned}
\end{equation*}
In order for this element to be in $H_\alpha^D$ (hence diagonal), we must have that $ab = 0$ and $cd = 0$.
Suppose that $a = 0$. If $c = 0$, then we have the zero matrix, so we must have that $c \neq 0$ and instead that $d = 0$. This gives us that $g$ is a matrix of the form
\begin{equation*}
g = \begin{pmatrix}
0 & b\\
c & 0
\end{pmatrix}.
\end{equation*}
We note that the square of a matrix of this form is a diagonal matrix.
Similarly, if $b = 0$, we must have that $c = 0$ and we have that $g$ is a matrix of the form
\begin{equation*}
g=
\begin{pmatrix}
a & 0\\
0 & d
\end{pmatrix}.
\end{equation*}
Together, these two conclusions imply that either $g$ or $g^2$ is diagonal.

Now we show that $D_\alpha(\Gamma) < \PSL_2(\mathbb R)$.  If not, then there exists $g \in D_\alpha(\Gamma)$ with $0<\det(g) \neq 1$.  After squaring and inverting if necessary, we may assume that $g$ is diagonal,
\[ g= \left( \begin{matrix} \lambda & 0 \\ 0 & \sigma \end{matrix} \right), \]
and $0 < \det(g) = \lambda \sigma < 1$. Without loss of generality, suppose $\lambda < 1$.
Notice that there exists an element $h \in H_{\alpha}^D$ such that
 \begin{equation*}
 h = 
\begin{pmatrix}
\mu & 0\\
0 & \mu^{-1}
\end{pmatrix}
\end{equation*}
and there exist $n,k \in \mathbb{Z}$ so that
 \begin{equation*} m = g^{n}h^k =
\begin{pmatrix}
r & 0\\
0 & s
\end{pmatrix}
\end{equation*}
where $0 < r,s < 1$.
Therefore, $m^j$ is a contraction for all $j > 0$, which implies that it is contracting in both directions.  Fixing a saddle connection $\omega$ of $q_\alpha$, it follows that the length of $m^j(\omega)$ tends to $0$ as $j \to \infty$. This contradicts Corollary~\ref{C:bounded geometry}, part (1), and thus proves that $g \in \PSL_2(\mathbb R)$, as required.  
\end{proof}
\begin{remark} The final contradiction in the proof also follows from Theorem~1.1 of \cite{PrScVa}, since $D_\alpha(\Aff_+(X_\alpha,q_\alpha))$ is necessarily of type (i) in that theorem.
\end{remark}

\bibliographystyle{alpha}

\newcommand{\etalchar}[1]{$^{#1}$}

\end{document}